\theoremstyle{plain}
\newtheorem{thm}{Theorem}[section]
\newtheorem{lem}[thm]{Lemma}
\newtheorem{prop}[thm]{Proposition}
\newtheorem{defn}[thm]{Definition}
\newtheorem{rem}[thm]{Remark}
\begin{document}

\title{On Generalizations of Graded $2$-absorbing and Graded $2$-absorbing primary submodules}

\author{Azzh Saad \textsc{Alshehry}}
\address{Department of Mathematical Sciences, Faculty of Sciences, Princess Nourah Bint Abdulrahman University, P.O. Box 84428, Riyadh 11671, Saudi Arabia.}
\email{asalshihry@pnu.edu.sa}

\author{Rashid \textsc{Abu-Dawwas}}
\address{Department of Mathematics, Yarmouk University, Irbid, Jordan}
\email{rrashid@yu.edu.jo}

\subjclass[2010]{Primary 16W50; Secondary 13A02}

\keywords{Graded $\phi$-prime submodule, graded $\phi$-primary submodule, graded $2$-absorbing primary submodule, graded weakly $2$-absorbing primary submodule, graded $\phi$-$2$-absorbing submodule, graded $\phi$-$2$-absorbing primary submodule}

\begin{abstract}
Let $R$ be a graded commutative ring with non-zero unity $1$ and $M$ be a graded unitary $R$-module. In this article, we introduce the concepts of graded $\phi$-$2$-absorbing and graded $\phi$-$2$-absorbing primary submodules as generalizations of the concepts of graded $2$-absorbing and graded $2$-absorbing primary submodules. Let $GS(M)$ be the set of all graded $R$-submodules of $M$ and $\phi: GS(M)\rightarrow GS(M)\bigcup\{\emptyset\}$ be a function. A proper graded $R$-submodule $K$ of $M$ is said to be a graded $\phi$-$2$-absorbing $R$-submodule of $M$ if whenever $x, y$ are homogeneous elements of $R$ and $m$ is a homogeneous element of $M$ with $xym\in K-\phi(K)$, then $xm\in K$ or $ym\in K$ or $xy\in (K :_{R} M)$, and $K$ is said to be a graded $\phi$-$2$-absorbing primary $R$-submodule of $M$ if whenever $x, y$ are homogeneous elements of $R$ and $m$ is a homogeneous element of $M$ with $xym\in K-\phi(K)$, then $xm$ or $ym$ is in the graded radical of $K$ or $xy\in (K:_{R}M)$. We investigate several properties of these new types of graded submodules.
\end{abstract}

\maketitle

\section{Introduction}

Throughout this article, $G$ will be a group with identity $e$ and $R$ a commutative ring with nonzero unity $1$. Then $R$ is said to be $G$-graded if $R=\displaystyle\bigoplus_{g\in G} R_{g}$ with $R_{g}R_{h}\subseteq R_{gh}$ for all $g, h\in G$ where $R_{g}$ is an additive subgroup of $R$ for all $g\in G$. The elements of $R_{g}$ are called homogeneous of degree $g$. If $x\in R$, then $x$ can be written uniquely as $\displaystyle\sum_{g\in G}x_{g}$, where $x_{g}$ is the component of $x$ in $R_{g}$. It is known that $R_e$ is a subring of $R$ and $1\in R_e$. The set of all homogeneous elements of $R$ is $h(R)=\displaystyle\bigcup_{g\in G}R_{g}$. Assume that $M$ is a left unitary $R$-module. Then $M$ is said to be $G$-graded if $M=\displaystyle\bigoplus_{g\in G}M_{g}$ with $R_{g}M_{h}\subseteq M_{gh}$ for all $g,h\in G$ where $M_{g}$ is an additive subgroup of $M$ for all $g\in G$. The elements of $M_{g}$ are called homogeneous of degree $g$. It is clear that $M_{g}$ is an $R_{e}$-submodule of $M$ for all $g\in G$. If $x\in M$, then $x$ can be written uniquely as $\displaystyle\sum_{g\in G}x_{g}$, where $x_{g}$ is the component of $x$ in $M_{g}$. The set of all homogeneous elements of $M$ is $h(M)=\displaystyle\bigcup_{g\in G}M_{g}$. Let $K$ be an $R$-submodule of a graded $R$-module $M$. Then $K$ is said to be graded $R$-submodule if $K=\displaystyle\bigoplus_{g\in G}(K\cap M_{g})$, i.e., for $x\in K$, $x=\displaystyle\sum_{g\in G}x_{g}$ where $x_{g}\in K$ for all $g\in G$. An $R$-submodule of a graded $R$-module need not be graded. For more details and terminology, see \cite{Hazart, Nastasescue}.

\begin{lem}\label{1}(\cite{Farzalipour}, Lemma 2.1) Let $R$ be a graded ring and $M$ be a graded $R$-module.

\begin{enumerate}

\item If $I$ and $J$ are graded ideals of $R$, then $I+J$ and $I\bigcap J$ are graded ideals of $R$.

\item If $N$ and $K$ are graded $R$-submodules of $M$, then $N+K$ and $N\bigcap K$ are graded $R$-submodules of $M$.

\item If $N$ is a graded $R$-submodule of $M$, $r\in h(R)$, $x\in h(M)$ and $I$ is a graded ideal of $R$, then $Rx$, $IN$ and $rN$ are graded $R$-submodules of $M$. Moreover, $(N:_{R}M)=\left\{r\in R:rM\subseteq N\right\}$ is a graded ideal of $R$.
\end{enumerate}
\end{lem}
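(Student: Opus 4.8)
The plan is to use throughout the elementwise characterization of gradedness recalled just before the lemma: a submodule (respectively ideal) $L$ is graded precisely when, for every $x\in L$ with homogeneous decomposition $x=\sum_{g\in G}x_g$, each component $x_g$ again lies in $L$. With this criterion, parts (1) and (2) reduce to a single routine component-chase, since the arguments for ideals and for submodules are identical. For $I+J$ I would take $x=a+b$ with $a\in I$ and $b\in J$, decompose $a=\sum_g a_g$ and $b=\sum_g b_g$, and observe that by gradedness of $I$ and $J$ each $a_g\in I$ and $b_g\in J$; hence the degree-$g$ component of $x$ is $a_g+b_g\in I+J$. For $I\cap J$, if $x\in I\cap J$ then its components lie in $I$ (as $x\in I$) and in $J$ (as $x\in J$), so they lie in $I\cap J$. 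The same two computations verbatim give part (2) for $N+K$ and $N\cap K$.

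For the first three claims of part (3) I would again chase homogeneous components, now exploiting the grading relations $R_gM_h\subseteq M_{gh}$. If $x\in M_h$ and $rx\in Rx$ with $r=\sum_g r_g$, then $rx=\sum_g r_gx$ and each $r_gx\in R_gM_h\subseteq M_{gh}$ is homogeneous and lies in $Rx$; since distinct $g$ give distinct degrees $gh$ (cancellation in $G$), these are exactly the homogeneous components of $rx$, so $Rx$ is graded. The argument for $rN$ with $r\in R_s$ is symmetric, decomposing $n=\sum_g n_g$ and using $rn_g\in R_sM_g\subseteq M_{sg}\cap rN$. For $IN$ I would write a general element as $\sum_i a_im_i$, decompose each $a_i$ and $m_i$ into homogeneous parts, and note that every product $(a_i)_g(m_i)_h$ lies in $IN$ (since $(a_i)_g\in I$ and $(m_i)_h\in N$ by gradedness) and is homogeneous of degree $gh$; collecting equal degrees exhibits each homogeneous component of the element as a sum of elements of $IN$, hence in $IN$.

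The one step requiring genuine care — and the main obstacle — is showing that $(N:_RM)$ is graded, because for $r\in(N:_RM)$ there is no immediate reason that a component $r_g$ should satisfy $r_gM\subseteq N$. Here I would proceed in two stages. First reduce to homogeneous targets: for $m\in M_h$ we have $rm\in N$, and $rm=\sum_g r_gm$ with $r_gm\in M_{gh}$; since the degrees $gh$ are pairwise distinct (again by cancellation in $G$), these $r_gm$ are precisely the homogeneous components of $rm\in N$, and gradedness of $N$ forces $r_gm\in N$ for every $g$. Then for an arbitrary $m\in M$ I would write $m=\sum_h m_h$ and use additivity, $r_gm=\sum_h r_gm_h\in N$, to conclude $r_gM\subseteq N$, i.e. $r_g\in(N:_RM)$. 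Combined with the obvious fact that $(N:_RM)$ is an ideal, the component criterion then yields that it is a graded ideal, completing the proof.
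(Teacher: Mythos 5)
Your proof is correct. Note, however, that the paper itself offers no proof of this statement: it is imported verbatim from the reference (Farzalipour--Ghiasvand, Lemma 2.1) as a known preliminary, so there is no in-paper argument to compare yours against. Your component-chasing approach is the standard one and all steps are sound, including the only delicate point, the gradedness of $(N:_{R}M)$: you correctly reduce to homogeneous $m\in M_h$, use cancellation in $G$ to see that the elements $r_g m\in M_{gh}$ are precisely the homogeneous components of $rm\in N$, invoke gradedness of $N$ to place each $r_g m$ in $N$, and then pass to arbitrary $m$ by additivity. This is exactly the reason the colon ideal is graded, and it is the step most often glossed over; the remaining parts ($I+J$, $I\cap J$, $N+K$, $N\cap K$, $Rx$, $rN$, $IN$) are routine and handled correctly.
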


In particular, $Ann_{R}(M)=(0:_{R}M)$ is a graded ideal of $R$. Let $I$ be a proper graded ideal of $R$. Then the graded radical of $I$ is $Grad(I)$, and is defined to be the set of all $r\in R$ such that for each $g\in G$, there exists a positive integer $n_{g}$ for which $r_{g}^{n_{g}}\in I$. One can see that if $r\in h(R)$, then $r\in Grad(I)$ if and only if $r^{n}\in I$ for some positive integer $n$. In fact, $Grad(I)$ is a graded ideal of $R$, see \cite{Refai Hailat}. Let $N$ be a graded $R$-submodule of $M$. Then the graded radical of $N$ is denoted by $Grad_{M}(N)$ and it is defined to be the intersection of all graded prime submodules of $M$ containing $N$. If there is no graded prime submodule containing $N$, then we take $Grad_{M}(N)=M$.

 A graded prime (resp. graded primary) $R$-submodule is a proper graded $R$-submodule $K$ of $M$ with the property that for $r\in h(R)$ and $m\in h(M)$ such that $rm\in K$ implies that $m\in K$ or $r\in (K :_{R} M)$ (resp. $m\in K$ or $r\in Grad((K :_{R} M)))$. As graded prime ideals (submodules) have an important role in graded ring (module) theory, several authors generalized these concepts in different ways, see (\cite{Dawwas Shashan, Zoubi Dawwas 2, Zoubi Azaizeh, Ece Yetkin Celikel, Oral}). Graded weakly prime submodules have been introduced by Atani in \cite{Atani}. A proper graded $R$-submodule $K$ of $M$ is said to be a graded weakly prime $R$-submodule of $M$ if whenever $r\in h(R)$ and $m\in h(M)$ such that $0\neq rm\in K$, then $m\in K$ or $r\in (K :_{R} M)$. The concept of graded $\phi$-prime submodules has been introduced in \cite{Dawwas Alshehry}. Let $GS(M)$ be the set of all graded $R$-submodules of $M$ and $\phi: GS(M)\rightarrow GS(M)\bigcup\{\emptyset\}$ be a function. A proper graded $R$-submodule $K$ of $M$ is said to be a graded $\phi$-prime $R$-submodule of $M$ if whenever $r\in h(R)$ and $m\in h(M)$ such that $rm\in K-\phi(K)$, then $m\in K$ or $r\in (K:_{R}M)$.

The concept of graded $2$-absorbing ideals (resp. graded weakly $2$-absorbing ideals) is introduced in \cite{Zoubi Dawwas Ceken} as a different generalization of graded prime ideals (resp. graded weakly prime ideals). A proper graded ideal $I$ of $R$ is a graded $2$-absorbing ideal (resp. graded weakly $2$-absorbing ideal) of $R$ if whenever $x, y, z\in h(R)$ and $xyz\in I$ (resp. $0\neq xyz\in I$), then $xy\in I$ or $xz\in I$ or $yz\in I$. Then introducing graded $2$-absorbing submodules (resp. graded weakly $2$-absorbing submodules) in \cite{Zoubi Dawwas 2} generalized the concept of graded $2$-absorbing ideals (resp. graded weakly $2$-absorbing ideals) to graded submodules as following: A proper graded $R$-submodule $K$ of $M$ is said to be a graded $2$-absorbing $R$-submodule (resp. graded weakly $2$-absorbing $R$-submodule) of $M$ if whenever $x, y\in h(R)$ and $m\in h(M)$ with $xym\in K$ (resp. $0\neq xym\in K$), then $xy\in (K :_{R} M)$ or $xm\in K$ or $ym\in K$.

Al-Zoubi and Sharafat in \cite{Zoubi Sharafat} introduced the concept of graded $2$-absorbing primary ideals, where a proper graded ideal $I$ of $R$ is called graded $2$-absorbing primary if whenever $x, y, z\in h(R)$ with $xyz\in I$, then $xy\in I$ or $xz\in Grad(I)$ or $yz\in Grad(I)$. The concept of graded $2$-absorbing primary submodules is studied in \cite{Ece Yetkin Celikel} as a generalization of graded $2$-absorbing primary ideals. A proper graded $R$-submodule $K$ of $M$ is said to be a graded $2$-absorbing primary $R$-submodule (resp. graded weakly $2$-absorbing primary $R$-submodule) of $M$ if
whenever $xy\in h(R)$ and $m\in h(M)$ with $xym\in K$ (resp. $0\neq xym\in K$), then $xy\in (K :_{R} M)$ or $xm\in Grad_{M}(K)$ or $ym\in Grad_{M}(K)$.

A graded $R$-module $M$ is said to be graded multiplication if for every graded $R$-submodule $N$ of $M$, $N=IM$ for some graded deal $I$ of $R$. In this case, it is known that $I=(N:_{R}M)$. Graded multiplication modules were firstly introduced and studied by Escoriza and Torrecillas in \cite{Escoriza}, and further results were obtained by several authors, see for example \cite{Khaksari}. Let $N$ and $K$ be graded $R$-submodules of a graded multiplication $R$-module $M$ with $N = IM$ and $K = JM$ for some graded ideals $I$ and $J$ of $R$. The product of $N$ and $K$ is denoted by $NK$ is defined by $NK =IJM$. Then the product of $N$ and $K$ is independent of presentations of $N$ and $K$. In fact, as $IJ$ is a graded ideal of $R$ (see \cite{Nastasescue}), $NK$ is a graded $R$-submodule of $M$ and $NK\subseteq N\bigcap K$. Moreover, for $x, y\in h(M)$, by $xy$, we mean the product of $Rx$ and $Ry$. Also, it is shown in (\cite{Oral}, Theorem 9) that if $N$ is a proper graded $R$-submodule of a graded multiplication $R$-module $M$, then $Grad_{M}(N) =Grad((N :_{R} M))M$.

In this article, our aim is to extend the concept of graded $2$-absorbing submodules to graded $\phi$-$2$-absorbing submodules in completely different way from \cite{Dawwas Alshehry}, and also to extend graded $2$-absorbing primary submodules to graded $\phi$-$2$-absorbing primary submodules. Our study is inspired from \cite{Hojjat}.

\section{Graded $\phi$-$2$-Absorbing and Graded $\phi$-$2$-Absorbing Primary Submodules}

In this section, we introduce and study the concepts of graded $\phi$-$2$-absorbing and graded $\phi$-$2$-absorbing primary submodules.

\begin{defn}Let $M$ be a $G$-graded $R$-module and $\phi: GS(M)\rightarrow GS(M)\bigcup\{\emptyset\}$ be a function.
\begin{enumerate}
\item A proper graded $R$-submodule $K$ of $M$ is said to be a graded $\phi$-primary $R$-submodule of $M$ if whenever $r\in h(R)$ and $m\in h(M)$ with $rm\in K-\phi(K)$, then $m\in K$ or $r\in Grad((K :_{R} M))$.

\item Let $K$ be a graded $R$-submodule of $M$ and $g\in G$ such that $K_{g}\neq M_{g}$. Then $K$ is said to be a $g$-$\phi$-primary $R$-submodule of $M$ if whenever $r\in R_{e}$ and $m\in M_{g}$ with $rm\in K-\phi(K)$, then $m\in K$ or $r\in Grad((K :_{R} M))$.

\item A proper graded $R$-submodule $K$ of $M$ is said to be a graded $\phi$-$2$-absorbing $R$-submodule of $M$ if whenever $x, y\in h(R)$ and $m\in h(M)$ with $xym\in K-\phi(K)$, then $xm\in K$ or $ym\in K$ or $xy\in (K :_{R} M)$.

\item Let $K$ be a graded $R$-submodule of $M$ and $g\in G$ such that $K_{g}\neq M_{g}$. Then $K$ is said to be a $g$-$\phi$-$2$-absorbing $R$-submodule of $M$ if whenever $x, y\in R_{e}$ and $m\in M_{g}$ with $xym\in K-\phi(K)$, then $xm\in K$ or $ym\in K$ or $xy\in (K :_{R} M)$.

\item A proper graded $R$-submodule $K$ of $M$ is said to be a graded $\phi$-$2$-absorbing primary $R$-submodule of $M$ if whenever $x, y\in h(R)$ and $m\in h(M)$ with $xym\in K-\phi(K)$, then $xm\in Grad_{M}(K)$ or $ym\in Grad_{M}(K)$ or $xy\in (K:_{R}M)$.

\item Let $K$ be a graded $R$-submodule of $M$ and $g\in G$ such that $K_{g}\neq M_{g}$. Then $K$ is said to be a $g$-$\phi$-$2$-absorbing primary $R$-submodule of $M$ if whenever $x, y\in R_{e}$ and $m\in M_{g}$ with $xym\in K-\phi(K)$, then $xm\in Grad_{M}(K)$ or $ym\in Grad_{M}(K)$ or $xy\in (K:_{R}M)$.
\end{enumerate}
\end{defn}

\begin{rem}\label{Lemma 2.2}
\begin{enumerate}
\item Let $K$ be a graded $\phi$-primary $R$-submodule of a graded multiplication $R$-module $M$. Then

\begin{center}
$\phi_{\emptyset}(K)=\emptyset$\hspace{0.5cm} graded primary submodule,

\vspace{0.25cm}

$\phi_{0}(K)=\{0\}$\hspace{0.5cm} graded weakly primary submodule,

\vspace{0.25cm}

$\phi_{2}(K)=K^{2}$\hspace{0.5cm} graded almost primary submodule,

\vspace{0.25cm}

$\phi_{n}(K)=K^{n}$\hspace{0.5cm} graded $n$-almost primary submodule, and

\vspace{0.25cm}

$\phi_{\omega}(K)=\displaystyle\bigcap_{n=1}^{\infty}K^{n}$\hspace{0.5cm} graded $\omega$-primary submodule.
\end{center}

\item Let $K$ be a graded $\phi$-$2$-absorbing (resp. graded $\phi$-$2$-absorbing primary) $R$-submodule of a graded multiplication $R$-module $M$. Then

\begin{center}
$\phi_{\emptyset}(K)=\emptyset$\hspace{0.5cm} graded $2$-absorbing (resp. graded $2$-absorbing primary) submodule,

\vspace{0.25cm}

$\phi_{0}(K)=\{0\}$\hspace{0.5cm} graded weakly $2$-absorbing (resp. graded weakly $2$-absorbing primary) submodule,

\vspace{0.25cm}

$\phi_{2}(K)=K^{2}$\hspace{0.5cm} graded almost $2$-absorbing (resp. graded almost $2$-absorbing primary) submodule,

\vspace{0.25cm}

$\phi_{n}(K)=K^{n}$\hspace{0.5cm} graded $n$-almost $2$-absorbing (resp. graded $n$-almost $2$-absorbing primary) submodule, and

\vspace{0.25cm}

$\phi_{\omega}(K)=\displaystyle\bigcap_{n=1}^{\infty}K^{n}$\hspace{0.5cm} graded $\omega$-$2$-absorbing (resp. graded $\omega$-$2$-absorbing primary) submodule.
\end{center}

\item For functions $\phi, \varphi:GS(M)\rightarrow GS(M)\bigcup\{\emptyset\}$, we write $\phi\leq\varphi$ if $\phi(K)\subseteq\varphi(K)$ for all $K\in GS(M)$. Thus clearly we have the following order:
\begin{center}
$\phi_{\emptyset}\leq\phi_{0}\leq\phi_{\omega}\leq...\leq\phi_{n+1}\leq\phi_{n}\leq...\leq\phi_{2}\leq\phi_{1}$.
\end{center}

\item If $\phi\leq\varphi$, then every graded $\phi$-$2$-absorbing (resp. graded $\phi$-$2$-absorbing primary) $R$-submodule is graded $\varphi$-$2$-absorbing (resp. graded $\varphi$-$2$-absorbing primary).

\item Since $K-\phi(K) = K-(K\bigcap\phi(K))$ for any graded $R$-submodule $K$ of $M$, without loss of generality, throughout this article, we assume that $\phi(K)\subseteq K$.
\end{enumerate}
\end{rem}

\begin{thm}\label{Theorem 2.3} Let $M$ be a graded $R$-module and $K$ be a proper graded $R$-submodule of $M$. Then the followings hold:
\begin{enumerate}
\item $K$ is a graded $\phi$-prime $R$-submodule of $M$ $\Rightarrow$ $K$ is a graded $\phi$-$2$-absorbing $R$-submodule of $M$ $\Rightarrow$ $K$ is a graded $\phi$-$2$-absorbing primary $R$-submodule of $M$.

\item If $M$ is a graded multiplication $R$-module and $K$ is a graded $\phi$-primary $R$-submodule of $M$, then $K$ is a graded $\phi$-$2$-absorbing primary $R$-submodule of $M$.

\item For graded multiplication $R$-module $M$, $K$ is a graded $2$-absorbing $R$-submodule of $M$ $\Rightarrow$ $K$ is a graded weakly $2$-absorbing $R$-submodule of $M$ $\Rightarrow$ $K$ is a graded $\omega$-$2$-absorbing $R$-submodule of $M$ $\Rightarrow$ $K$ is a graded $(n+1)$-almost $2$-absorbing $R$-submodule of $M$ $\Rightarrow$ $K$ is a graded $n$-almost $2$-absorbing $R$-submodule of $M$ for all $n\geq2$ $\Rightarrow$ $K$ is a graded almost $2$-absorbing $R$-submodule of $M$.

\item For graded multiplication $R$-module $M$, $K$ is a graded $2$-absorbing primary $R$-submodule of $M$ $\Rightarrow$ $K$ is a graded weakly $2$-absorbing primary $R$-submodule of $M$ $\Rightarrow$ $K$ is a graded $\omega$-$2$-absorbing primary $R$-submodule of $M$ $\Rightarrow$ $K$ is a graded $(n+1)$-almost $2$-absorbing primary $R$-submodule of $M$ $\Rightarrow$ $K$ is a graded $n$-almost $2$-absorbing primary $R$-submodule of $M$ for all $n\geq2$ $\Rightarrow$ $K$ is a graded almost $2$-absorbing primary $R$-submodule of $M$.

\item Suppose that $Grad_{M}(K) = K$. Then $K$ is a graded $\phi$-$2$-absorbing primary $R$-submodule of $M$ if and only if $K$ is a graded $\phi$-$2$-absorbing $R$-submodule of $M$.

\item If $M$ is a graded multiplication $R$-module and $K$ is an idempotent $R$-submodule of $M$, then $K$ is a graded $\omega$-$2$-absorbing $R$-submodule of $M$, and $K$ is a graded $n$-almost $2$-absorbing $R$-submodule of $M$ for every $n\geq2$.

\item Let $M$ be a graded multiplication $R$-module. Then $K$ is a graded $n$-almost $2$-absorbing (resp. graded $n$-almost $2$-absorbing primary) $R$-submodule of $M$ for all $n\geq2$ if and only if $K$ is a graded $\omega$-$2$-absorbing (resp. graded $\omega$-$2$-absorbing primary) $R$-submodule of $M$.
\end{enumerate}
\end{thm}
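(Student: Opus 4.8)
The plan is to treat the seven parts largely in numerical order, reducing the ``chain'' statements (3), (4), (6), (7) to Remark \ref{Lemma 2.2} and reserving the only genuine argument for the forward direction of (7); the radical identity $Grad_M(K)=Grad((K:_R M))M$ for multiplication modules will carry the parts that mix the radical into the conclusion.

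For part (1), I would prove both implications by regrouping. Given $x,y\in h(R)$ and $m\in h(M)$ with $xym\in K-\phi(K)$, write $xym=x(ym)$; since $ym\in h(M)$, the graded $\phi$-prime hypothesis yields $ym\in K$ or $x\in(K:_R M)$, and in the second case $xy\in(K:_R M)$ because $(K:_R M)$ is an ideal --- so the graded $\phi$-$2$-absorbing conclusion holds. The passage from $\phi$-$2$-absorbing to $\phi$-$2$-absorbing primary is immediate from $K\subseteq Grad_M(K)$. Part (5) is this same inclusion read in reverse: when $Grad_M(K)=K$, the primary conclusion $xm\in Grad_M(K)$ or $ym\in Grad_M(K)$ or $xy\in(K:_R M)$ is literally the $2$-absorbing conclusion, and the converse direction is supplied by (1).

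For part (2), I again regroup $xym=x(ym)$ and apply the graded $\phi$-primary hypothesis to obtain $ym\in K$ or $x\in Grad((K:_R M))$. The first alternative gives $ym\in K\subseteq Grad_M(K)$; for the second I would invoke $Grad_M(K)=Grad((K:_R M))M$ to conclude $xm\in Grad((K:_R M))M=Grad_M(K)$, so the graded $\phi$-$2$-absorbing primary conclusion holds. Parts (3) and (4) then cost nothing: with $M$ multiplication so that $K^n$ and $\bigcap_n K^n$ are defined, each arrow is an instance of the order $\phi_\emptyset\leq\phi_0\leq\phi_\omega\leq\cdots\leq\phi_{n+1}\leq\phi_n\leq\cdots\leq\phi_2$ from Remark \ref{Lemma 2.2}(3) together with the monotonicity of Remark \ref{Lemma 2.2}(4); the only inclusion worth noting is $K^n\subseteq K^2$ for $n\geq2$, which licenses the last arrow into almost $2$-absorbing. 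Part (6) is a vacuity: an idempotent $K$ satisfies $K^n=K$ for every $n\geq1$, hence $\bigcap_n K^n=K$, so $\phi_n(K)=\phi_\omega(K)=K$ and $K-\phi_n(K)=K-\phi_\omega(K)=\emptyset$, making the defining implication vacuously true.

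The substance of the theorem is part (7). Its reverse direction is once more monotonicity, since $\phi_\omega\leq\phi_n$ for every $n$. For the forward direction I would assume $K$ is graded $n$-almost $2$-absorbing for all $n\geq2$ and take $x,y\in h(R)$, $m\in h(M)$ with $xym\in K-\phi_\omega(K)=K-\bigcap_{n=1}^{\infty}K^n$. The crucial observation is that $xym\in K=K^1$ forces any failure of membership in the intersection to occur at an index $n_0\geq2$, i.e. $xym\notin K^{n_0}$ with $n_0\geq2$; then $xym\in K-\phi_{n_0}(K)$ and the $n_0$-almost $2$-absorbing hypothesis gives $xm\in K$ or $ym\in K$ or $xy\in(K:_R M)$, which is precisely the $\omega$-$2$-absorbing conclusion. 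The primary analogue runs identically with $Grad_M(K)$ replacing $K$ throughout the conclusion. I expect the only real obstacle to be the bookkeeping that guarantees $n_0\geq2$, so that the undefined ``$1$-almost'' hypothesis is never needed; beyond this, everything reduces to the definitions and the two cited structural facts about graded multiplication modules.
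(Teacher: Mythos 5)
Your proposal is correct and follows essentially the same route as the paper: parts (3), (4) and the reverse direction of (7) via the order $\phi_{\emptyset}\leq\phi_{0}\leq\phi_{\omega}\leq\cdots\leq\phi_{2}$ and monotonicity from Remark \ref{Lemma 2.2}, part (2) via the identity $Grad_{M}(K)=Grad((K:_{R}M))M$ for graded multiplication modules, part (6) via $K^{n}=K$ forcing $\phi_{\omega}(K)=K$, and the forward direction of (7) via the observation that a failing index $n_{0}$ with $xym\notin K^{n_{0}}$ must satisfy $n_{0}\geq 2$ because $xym\in K=K^{1}$. You additionally write out the regrouping argument for (1) and the vacuity details that the paper dismisses as ``straightforward'' or ``obvious,'' which is a harmless elaboration rather than a different method.
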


\begin{proof}
\begin{enumerate}
\item It is straightforward.

\item Let $x, y\in h(R)$ and $m\in h(M)$ such that $xym\in K-\phi(K)$. Assume that $ym\notin Grad_{M}(K)$. Then $ym\notin K$ and then $x\in Grad((K :_{R} M))$ as $K$ is a graded $\phi$-primary $R$-submodule. Therefore, $xm\in Grad((K :_{R} M)M = Grad_{M}(K)$. Consequently, $K$ is graded $\phi$-$2$-absorbing primary.

\item It is clear by Remark \ref{Lemma 2.2} (4).

\item It is clear by Remark \ref{Lemma 2.2} (4).

\item The claim is obvious.

\item Since $K$ is an idempotent $R$-submodule, $K^{n}=K$ for all $n>0$, and then $\phi_{\omega}(K) =\displaystyle\bigcap_{n=1}^{\infty}K^{n}=K$. Thus $K$ is a graded $\omega$-$2$-absorbing $R$-submodule of $M$. By (3), we conclude that $K$ is a graded $n$-almost $2$-absorbing $R$-submodule of $M$ for all $n\geq2$.

\item Suppose that $K$ is a graded $n$-almost $2$-absorbing (resp. graded $n$-almost $2$-absorbing primary) $R$-submodule of $M$ for all $n\geq2$. Let $x, y\in h(R)$ and $m\in h(M)$ with $xym\in K$ but $xym\notin \displaystyle\bigcap_{n=1}^{\infty}K^{n}$. Hence $xym\notin K^{n}$ for some $n\geq2$. Since $K$ is graded $n$-almost $2$-absorbing (resp. graded $n$-almost $2$-absorbing primary) for all $n\geq2$, this implies either $xy\in (K :_{R} M)$ or $ym\in K$ or $xm\in K$ (resp. $xy\in (K :_{R} M)$ or $ym\in Grad_{M}(K)$ or $xm\in Grad_{M}(K)$). This completes the first implication. The converse is clear from (3) (resp. from (4)).
\end{enumerate}
\end{proof}

Let $M$ be a $G$-graded $R$-module and $K$ be a graded $R$-submodule of $M$. Then $M/K$ is $G$-graded by $(M/K)_{g}=(M_{g}+K)/K$ for all $g\in G$ (\cite{Nastasescue}).

\begin{lem}(\cite{Saber}, Lemma 3.2) Let $M$ be a graded $R$-module, $K$ be a graded $R$-submodule of $M$, and $N$ be an $R$-submodules of $M$ such that $K\subseteq N$. Then $N$ is a graded $R$-submodule of $M$ if and only if $N/K$ is a graded $R$-submodule of $M/K$.
\end{lem}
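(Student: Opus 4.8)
The plan is to exploit the explicit description of the grading on $M/K$, namely $(M/K)_{g}=(M_{g}+K)/K$, and to match up the homogeneous decomposition of an element $x\in M$ with that of its image $x+K\in M/K$. First I would record the key preliminary observation: because $K$ is a graded $R$-submodule, $K=\bigoplus_{g\in G}(K\cap M_{g})$, and this is exactly what makes the decomposition $M/K=\bigoplus_{g\in G}(M_{g}+K)/K$ direct. Indeed, if $\sum_{g}(m_{g}+K)=0$ in $M/K$ with $m_{g}\in M_{g}$, then $\sum_{g}m_{g}\in K$, and the gradedness of $K$ forces each $m_{g}\in K\cap M_{g}$, whence $m_{g}+K=0$. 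Consequently, for $x=\sum_{g}x_{g}\in M$ with $x_{g}\in M_{g}$, the unique homogeneous component of $x+K$ in degree $g$ is precisely $x_{g}+K$.

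With this identification in hand, both implications become short. For the forward direction, assume $N$ is a graded $R$-submodule of $M$ and take $x+K\in N/K$ with $x\in N$. Writing $x=\sum_{g}x_{g}$, the gradedness of $N$ gives $x_{g}\in N$ for every $g$, so each component $x_{g}+K$ of $x+K$ lies in $N/K$; hence $N/K$ is graded. For the converse, assume $N/K$ is a graded $R$-submodule of $M/K$ and take $x\in N$ with $x=\sum_{g}x_{g}$. Then $x+K\in N/K$ has homogeneous components $x_{g}+K$, which lie in $N/K$ by hypothesis. I would then pull this back to $M$: $x_{g}+K\in N/K$ means $x_{g}-n\in K$ for some $n\in N$, and since $K\subseteq N$ this yields $x_{g}\in N$. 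As $x\in N$ was arbitrary, $N=\bigoplus_{g}(N\cap M_{g})$ and $N$ is graded.

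I do not expect a serious obstacle here; the content is essentially the graded correspondence theorem. The one point that needs genuine care, rather than routine checking, is the reverse implication, where membership $x_{g}+K\in N/K$ must be lifted back to membership $x_{g}\in N$ in $M$. This step is precisely where the hypothesis $K\subseteq N$ is indispensable: without it the preimage of $N/K$ under the projection $M\rightarrow M/K$ need not be contained in $N$, and the conclusion would fail. The other subtlety worth flagging explicitly is that the gradedness of $K$ (not merely that $K$ is a submodule) is what legitimizes identifying the degree-$g$ part of $x+K$ with $x_{g}+K$; I would state this as the preliminary observation above before running either implication.
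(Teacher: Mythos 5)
Your proof is correct and complete: the preliminary observation that the gradedness of $K$ makes the sum $M/K=\bigoplus_{g}(M_{g}+K)/K$ direct is exactly the right thing to pin down first, and both implications then follow as you describe, with the hypothesis $K\subseteq N$ used precisely where you say it is. Note, however, that the paper itself gives no proof of this lemma --- it is quoted verbatim from the reference (\cite{Saber}, Lemma 3.2) --- so there is no in-paper argument to compare against; your self-contained argument is the standard correspondence-theorem proof and would serve as a faithful reconstruction of the cited result.
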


\begin{thm}\label{Theorem 2.4} Let $M$ be a graded $R$-module and $K$ be a proper graded $R$-submodule of $M$. Then the following hold:
\begin{enumerate}
\item $K$ is a graded $\phi$-$2$-absorbing $R$-submodule of $M$ if and only if $K/\phi(K)$ is a graded weakly $2$-absorbing $R$-submodule of $M/\phi(K)$.

\item $K$ is a graded $\phi$-$2$-absorbing primary $R$-submodule of $M$ if and only if $K/\phi(K)$ is a graded weakly $2$-absorbing primary $R$-submodule of $M/\phi(K)$.

\item $K$ is a graded $\phi$-prime $R$-submodule of $M$ if and only if $K/\phi(K)$ is a graded weakly prime $R$-submodule of $M/\phi(K)$.

\item $K$ is a graded $\phi$-primary $R$-submodule of $M$ if and only if $K/\phi(K)$ is a graded weakly primary $R$-submodule of $M/\phi(K)$.
\end{enumerate}
\end{thm}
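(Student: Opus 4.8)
The plan is to build a single dictionary relating the defining data on $M$ to the data on the quotient $M/\phi(K)$, and then to read all four equivalences directly off this dictionary. Write $N=\phi(K)$, so that $N\subseteq K$ is a graded $R$-submodule of $M$ (recall we assume $\phi(K)\subseteq K$); by the preceding Lemma, $K/N$ is then a graded $R$-submodule of the graded $R$-module $M/N$, whose grading is $(M/N)_{g}=(M_{g}+N)/N$. The first step is to record three elementary facts:
\begin{enumerate}
\item[(i)] every homogeneous element of $M/N$ of degree $g$ has a representative in $M_{g}$, so the homogeneous elements of $M/N$ are exactly the cosets $m+N$ with $m\in h(M)$;
\item[(ii)] for $x,y\in h(R)$ and $m\in h(M)$ we have $xy(m+N)=xym+N$, which is the zero element of $M/N$ precisely when $xym\in N=\phi(K)$ and lies in $K/N$ precisely when $xym\in K$, so that
\[
0\neq xy(m+N)\in K/N \iff xym\in K-\phi(K);
\]
\item[(iii)] since $N\subseteq K$, for $r\in h(R)$ we have $r(m+N)\in K/N\iff rm\in K$, and $(K/N:_{R}M/N)=(K:_{R}M)$, because $r(M/N)\subseteq K/N\iff rM\subseteq K$.
\end{enumerate}

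With (i)--(iii) in hand, part (1) is immediate. If $K$ is graded $\phi$-$2$-absorbing and $0\neq xy(m+N)\in K/N$ with $x,y\in h(R)$ and $m+N\in h(M/N)$ (take $m\in h(M)$ by (i)), then (ii) gives $xym\in K-\phi(K)$, hence $xm\in K$ or $ym\in K$ or $xy\in(K:_{R}M)$; by (iii) this reads $x(m+N)\in K/N$ or $y(m+N)\in K/N$ or $xy\in(K/N:_{R}M/N)$, so $K/N$ is graded weakly $2$-absorbing. The converse reverses each step. Part (3) (the prime case) is the same translation with the single conclusion $m\in K$, and part (4) (the primary case) is the same translation with the two conclusions $m\in K$ and $r\in Grad((K:_{R}M))$: here the graded radical occurring is that of the \emph{ideal} $(K:_{R}M)$, and since $(K/N:_{R}M/N)=(K:_{R}M)$ by (iii), we get $Grad((K/N:_{R}M/N))=Grad((K:_{R}M))$ at once, so no further work is needed.

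Part (2) is the only case that genuinely requires more, since the graded weakly $2$-absorbing primary condition on $K/N$ involves the \emph{submodule} radical $Grad_{M/N}(K/N)$. For it I additionally need the correspondence
\[
Grad_{M/N}(K/N)=Grad_{M}(K)/N .
\]
This follows from the order-isomorphism between graded $R$-submodules of $M$ containing $N$ and graded $R$-submodules of $M/N$: a graded prime submodule $P$ of $M$ with $K\subseteq P$ satisfies $N\subseteq P$ and corresponds to the graded prime submodule $P/N$ of $M/N$ containing $K/N$, and conversely every graded prime submodule of $M/N$ containing $K/N$ arises this way; intersecting over these two matching families yields the displayed equality. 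Granting this, the alternative $xm\in Grad_{M}(K)$ translates via (i) to $x(m+N)\in Grad_{M/N}(K/N)$, and part (2) then follows exactly as part (1) did, with $Grad_{M}(K)$ replacing $K$ in the two radical alternatives.

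The verifications (i)--(iii) are routine coset bookkeeping. The one step that genuinely needs justification, and which I expect to be the main obstacle, is the submodule radical correspondence $Grad_{M/N}(K/N)=Grad_{M}(K)/N$: since $Grad_{M}$ is defined as an intersection of graded prime submodules, one must check that the bijection between the relevant graded prime submodules of $M$ and of $M/N$ preserves primeness in both directions and is compatible with taking intersections.
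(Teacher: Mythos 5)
Your proposal is correct and takes essentially the same approach as the paper: both proofs translate the defining conditions through the quotient $M/\phi(K)$ using homogeneous representatives, the identity $(K/\phi(K):_{R}M/\phi(K))=(K:_{R}M)$, and the correspondence $Grad_{M/\phi(K)}(K/\phi(K))=Grad_{M}(K)/\phi(K)$, which the paper merely asserts and you additionally justify via the bijection between graded prime submodules of $M$ containing $\phi(K)$ and those of $M/\phi(K)$. The only point you pass over is the degenerate case $\phi(K)=\emptyset$ (where $\phi(K)$ is not a submodule and the quotient needs a convention), which the paper itself dispatches with a single clause.
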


\begin{proof}
\begin{enumerate}
\item If $\phi(K) = \emptyset$, then it is done. Suppose that $\phi(K)\neq\emptyset$. Let $x, y\in h(R)$ and $m+\phi(K)\in h(M/\phi(K))$ such that $\phi(K)\neq xy(m + \phi(K)) = xym + \phi(K)\in K/\phi(K)$. Then $m\in h(M)$ such that $xym\in K$, but $xym\notin \phi(K)$. Hence either $xy\in (K :_{R} M)$ or $ym\in K$ or $xm\in K$. So, $xy\in (K/\phi(K) :_{R} M/\phi(K))$ or $y(m + \phi(K))\in K/\phi(K)$ or $x(m + \phi(K))\in K/\phi(K)$, as desired. Conversely, let $x, y\in h(R)$ and $m\in h(M)$ such that $xym\in K$ and $xym\notin \phi(K)$. Then $m+\phi(K)\in h(M/\phi(K)$ such that $\phi(K)\neq xy(m + \phi(K))\in K/\phi(K)$. Hence $xy\in(K/\phi(K) :_{R} M/\phi(K))$ or $y(m + \phi(K))\in K/\phi(K)$ or $x(m + \phi(K))\in K/\phi(K)$. So, $xy\in(K :_{R} M)$ or $ym\in K$ or $xm\in K$. Thus $K$ is a graded $\phi$-$2$-absorbing $R$-submodule of $M$.

\item Let $x, y\in h(R)$ and $m+\phi(K)\in h(M/\phi(K))$ such that $\phi(K)\neq xy(m + \phi(K)) = xym + \phi(K)\in K/\phi(K)$. Then $m\in h(M)$ such that $xym\in K$, but $xym\notin \phi(K)$. Hence either $xy\in(K :_{R} M)$ or $ym\in Grad_{M}(K)$ or $xm\in Grad_{M}(K)$. So, $xy\in (K :_{R} M)$ or $y(m + \phi(K))\in Grad_{M}(K)/\phi(K)$ or $x(m + \phi(K))\in Grad_{M}(K)/\phi(K)$. The result holds since $Grad_{M}(K)/\phi(K) = Grad_{M/\phi(K)}(K/\phi(K))$. One can easily prove the converse.
\end{enumerate}
Similarly, one can easily prove (3) and (4).
\end{proof}

Let $M$ and $L$ be two $G$-graded $R$-modules. An $R$-homomorphism $f:M\rightarrow L$ is said to be a graded $R$-homomorphism if $f(M_{g})\subseteq L_{g}$ for all $g\in G$ (\cite{Nastasescue}).

\begin{lem}\label{2}(\cite{Dawwas Shashan}, Lemma 2.16) Suppose that $f:M\rightarrow L$ is a graded $R$-homomorphism. If $N$ is a graded $R$-submodule of $L$, then $f^{-1}(N)$ is a graded $R$-submodule of $M$.
\end{lem}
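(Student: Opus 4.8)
The plan is to prove the two halves of the conclusion in turn. First I would check that $f^{-1}(N)$ is an ordinary $R$-submodule of $M$; this is the routine part, immediate from the fact that $f$ is $R$-linear and $N$ is an $R$-submodule of $L$. The substantive part is to upgrade this to gradedness, and for that I would use the component criterion recorded in the introduction: an $R$-submodule $K$ of a graded module is graded precisely when, for every $x\in K$ written as $x=\sum_{g\in G}x_{g}$ with $x_{g}\in M_{g}$, each homogeneous component $x_{g}$ again lies in $K$.

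So the heart of the argument is to take an arbitrary $x\in f^{-1}(N)$, decompose it into its homogeneous components $x=\sum_{g\in G}x_{g}$ in $M$, and show each $x_{g}\in f^{-1}(N)$. I would apply $f$ and use additivity to obtain $f(x)=\sum_{g\in G}f(x_{g})$. The crucial point is that $f$ is a \emph{graded} homomorphism, so $f(M_{g})\subseteq L_{g}$ and hence $f(x_{g})\in L_{g}$ for each $g\in G$. By the uniqueness of the homogeneous decomposition in $L$, this exhibits $f(x_{g})$ as exactly the degree-$g$ component of $f(x)$.

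Now, since $x\in f^{-1}(N)$ we have $f(x)\in N$, and $N$ is a graded $R$-submodule of $L$; therefore every homogeneous component of $f(x)$ belongs to $N$. In particular $f(x_{g})\in N$, i.e. $x_{g}\in f^{-1}(N)$, for every $g\in G$. Since $x$ was arbitrary, the component criterion is met and $f^{-1}(N)$ is a graded $R$-submodule of $M$, completing the proof.

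I do not expect a genuine obstacle here; the only step demanding care is the identification of $f(x_{g})$ with the degree-$g$ component of $f(x)$, which is precisely where the grading hypothesis on $f$ (rather than mere $R$-linearity) is used. Without it the components $f(x_{g})$ need not be homogeneous of the matching degree, and the preimage could fail to be graded.
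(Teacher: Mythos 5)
Your proof is correct. Note that the paper itself gives no proof of this lemma --- it is quoted verbatim from the reference (\cite{Dawwas Shashan}, Lemma 2.16) --- but your argument is precisely the standard one: reduce to the component criterion for gradedness, use that $f$ is a \emph{graded} homomorphism to identify $f(x_{g})$ with the degree-$g$ component of $f(x)$ via uniqueness of homogeneous decompositions, and then invoke gradedness of $N$; the closing remark correctly pinpoints where the grading hypothesis on $f$ is essential.
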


\begin{lem}\label{3}(\cite{Atani Saraei}, Lemma 4.8) Suppose that $f:M\rightarrow L$ is a graded $R$-homomorphism. If $K$ is a graded $R$-submodule of $M$, then $f(K)$ is a graded $R$-submodule of $f(M)$.
\end{lem}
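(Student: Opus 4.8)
The plan is to work directly from the definitions, first pinning down the grading that $f(M)$ inherits from $L$ and then checking that $f(K)$ decomposes along it.

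First I would verify that $f(M)$ is itself a graded $R$-submodule of $L$. Since $f$ is graded, $f(M_{g})\subseteq L_{g}$ for every $g\in G$, so $f(M)=\sum_{g\in G}f(M_{g})$ is a sum of subgroups sitting in distinct homogeneous components of $L$; hence the sum is direct and $f(M)=\bigoplus_{g\in G}f(M_{g})$ with $(f(M))_{g}=f(M_{g})$. In particular $f(M)\cap L_{g}=f(M_{g})$, so the grading induced on $f(M)$ by $L$ is exactly $f(M_{g})$ in degree $g$.

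Next, $f(K)$ is an ordinary $R$-submodule of $f(M)$ because $f$ is an $R$-homomorphism and $K$ is a submodule; this step is routine. The substantive point is to show $f(K)=\bigoplus_{g\in G}\big(f(K)\cap (f(M))_{g}\big)$. Given $y\in f(K)$, choose $x\in K$ with $f(x)=y$. Because $K$ is graded, $x=\sum_{g\in G}x_{g}$ with each $x_{g}\in K\cap M_{g}$, and applying $f$ gives $y=\sum_{g\in G}f(x_{g})$ with $f(x_{g})\in f(M_{g})\subseteq L_{g}$. By uniqueness of the homogeneous decomposition in $L$, the elements $f(x_{g})$ are precisely the homogeneous components of $y$; each satisfies $f(x_{g})\in f(K)$ (since $x_{g}\in K$) and $f(x_{g})\in (f(M))_{g}$, hence $f(x_{g})\in f(K)\cap (f(M))_{g}$. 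This exhibits every element of $f(K)$ as a sum of its homogeneous components lying inside $f(K)$, which is exactly the condition for $f(K)$ to be a graded $R$-submodule of $f(M)$.

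The only place demanding care is the identification of $f(x_{g})$ with the genuine homogeneous components of $y$: this rests on $f$ being graded, so that $f(x_{g})$ lands in degree $g$, together with the directness of the decomposition of $L$, which forces the components to be unique. Once that identification is made, no further computation is required.
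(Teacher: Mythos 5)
Your proof is correct, but there is nothing in the paper to compare it against: the paper states this lemma as an imported result (\cite{Atani Saraei}, Lemma 4.8) and gives no proof, so your argument supplies details the paper omits. Your route is the natural definition-chasing one, and both of its steps are sound. First, you correctly establish that $f(M)$ carries the grading $(f(M))_{g}=f(M_{g})=f(M)\cap L_{g}$: since $f(M_{g})\subseteq L_{g}$, the subgroups $f(M_{g})$ lie in distinct homogeneous components of $L$, so their sum is direct, and additivity of $f$ gives $f(M)=\sum_{g\in G}f(M_{g})$. This preliminary step is genuinely necessary, since the statement only makes sense once $f(M)$ is recognized as a graded module, and it is the part most easily glossed over. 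Second, for $y=f(x)\in f(K)$ you decompose $x=\sum_{g}x_{g}$ with $x_{g}\in K\cap M_{g}$ (using that $K$ is graded), push through $f$, and use uniqueness of homogeneous decomposition in $L$ to identify $f(x_{g})$ as the degree-$g$ component of $y$; hence every component of $y$ lies in $f(K)\cap(f(M))_{g}$, which is precisely the paper's definition of $f(K)$ being a graded submodule of $f(M)$. The one delicate point — that the elements $f(x_{g})$ are the genuine homogeneous components of $y$ rather than merely some homogeneous elements summing to $y$ — is exactly the point you flag and correctly resolve via directness of $\bigoplus_{g}L_{g}$. No gap remains.
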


\begin{thm}\label{Theorem 2.9} Suppose that $f : M\rightarrow L$ is a graded $R$-epimorphism. Let $\phi: GS(M)\rightarrow GS(M)\bigcup\{\emptyset\}$ and $\varphi: GS(L)\rightarrow GS(L)\bigcup\{\emptyset\}$ be functions. Then the following hold:
\begin{enumerate}
\item If $N$ is a graded $\varphi$-$2$-absorbing primary $R$-submodule of $L$ and $\phi(f^{-1}(N)) = f^{-1}(\varphi(N))$, then $f^{-1}(N)$ is a graded $\phi$-$2$-absorbing primary $R$-submodule of $M$.

\item If $K$ is a graded $\phi$-$2$-absorbing primary $R$-submodule of $M$ containing $Ker(f)$ and $\varphi(f(K)) =f(\phi(K))$, then $f(K)$ is a graded $\varphi$-$2$-absorbing primary $R$-submodule of $L$.

\item If $N$ is a graded $\varphi$-$2$-absorbing $R$-submodule of $L$ and $\phi(f^{-1}(N)) = f^{-1}(\varphi(N))$, then $f^{-1}(N)$ is a graded $\phi$-$2$-absorbing $R$-submodule of $M$.

\item If $K$ is a graded $\phi$-$2$-absorbing $R$-submodule of $M$ containing $Ker(f)$ and $\varphi(f(K)) =f(\phi(K))$, then $f(K)$ is a graded $\varphi$-$2$-absorbing $R$-submodule of $L$.
\end{enumerate}
\end{thm}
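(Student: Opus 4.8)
The plan is to treat the four parts in two groups: parts (1),(3) concern the pullback $f^{-1}(N)$ and parts (2),(4) the pushforward $f(K)$, while within each group the ``$2$-absorbing primary'' versions (1),(2) differ from the ``$2$-absorbing'' versions (3),(4) only in that the graded radical $Grad_{M}$ replaces the submodule itself in two of the three alternatives. Since Lemma \ref{2} and Lemma \ref{3} already guarantee that $f^{-1}(N)$ and $f(K)$ are graded $R$-submodules, the argument reduces to (a) transporting the homogeneous test elements across $f$, (b) handling the three defining alternatives, and (c) controlling how $Grad_{M}$ interacts with $f$ and $f^{-1}$. Step (c) is where the real work lies; everything else is bookkeeping.

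First I would record the graded prime correspondence for the graded epimorphism $f:M\to L$. If $P$ is a graded prime submodule of $L$, then $f^{-1}(P)$ is a graded prime submodule of $M$ containing $Ker(f)$: given $rm\in f^{-1}(P)$ with $r\in h(R)$, $m\in h(M)$, one has $rf(m)\in P$, and primeness of $P$ together with $f(M)=L$ forces $m\in f^{-1}(P)$ or $rM\subseteq f^{-1}(P)$. Conversely, if $Q$ is a graded prime submodule of $M$ with $Ker(f)\subseteq Q$, then $f(Q)$ is graded prime in $L$, and $f^{-1}(f(Q))=Q$, $f(f^{-1}(P))=P$. Thus $f^{-1}$ is an inclusion-preserving bijection between the graded prime submodules of $L$ and the graded prime submodules of $M$ containing $Ker(f)$. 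Because every graded prime submodule of $M$ containing $f^{-1}(N)$ automatically contains $Ker(f)$, and because $f^{-1}$ commutes with arbitrary intersections while $f$ commutes with intersections of submodules that all contain $Ker(f)$, this yields the two radical identities I will need:
\begin{center}
$Grad_{M}(f^{-1}(N)) = f^{-1}(Grad_{L}(N))$\quad and\quad $f(Grad_{M}(K)) = Grad_{L}(f(K))$ when $Ker(f)\subseteq K$,
\end{center}
with the degenerate ``no containing prime'' cases (where a radical equals the whole module) matching up on both sides. I expect this correspondence, and in particular the equality of the two radicals, to be the main obstacle.

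With (c) in hand, part (1) is quick. Note $f^{-1}(N)$ is proper, since $f^{-1}(N)=M$ would force $N=f(M)=L$. Take $x,y\in h(R)$, $m\in h(M)$ with $xym\in f^{-1}(N)-\phi(f^{-1}(N))$; the hypothesis $\phi(f^{-1}(N))=f^{-1}(\varphi(N))$ turns this into $xyf(m)=f(xym)\in N-\varphi(N)$, with $f(m)\in h(L)$. Applying that $N$ is graded $\varphi$-$2$-absorbing primary gives $xf(m)\in Grad_{L}(N)$, or $yf(m)\in Grad_{L}(N)$, or $xy\in(N:_{R}L)$; the radical identity rewrites the first two as $xm\in Grad_{M}(f^{-1}(N))$ and $ym\in Grad_{M}(f^{-1}(N))$, while $xyL\subseteq N$ gives $xyM\subseteq f^{-1}(N)$, i.e. $xy\in(f^{-1}(N):_{R}M)$. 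Part (3) is the same computation with $Grad_{L}(N)$ replaced by $N$ and $Grad_{M}(f^{-1}(N))$ by $f^{-1}(N)$, so no radical identity is even needed there.

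For part (2), Lemma \ref{3} makes $f(K)$ a graded submodule of $f(M)=L$, and it is proper because $Ker(f)\subseteq K$ together with $f(K)=L$ would give $K=f^{-1}(L)=M$. Given $x,y\in h(R)$ and $l\in h(L)$ with $xyl\in f(K)-\varphi(f(K))$, I would use graded surjectivity ($f(M_{g})=L_{g}$ for each $g$) to lift $l=f(m)$ with $m\in h(M)$ of the same degree; then $Ker(f)\subseteq K$ yields $xym\in f^{-1}(f(K))=K$, and $\varphi(f(K))=f(\phi(K))$ forces $xym\notin\phi(K)$, so $xym\in K-\phi(K)$. Applying that $K$ is graded $\phi$-$2$-absorbing primary and pushing the conclusion back through $f$ --- using $f(Grad_{M}(K))=Grad_{L}(f(K))$ for the two radical alternatives and $xyM\subseteq K\Rightarrow xyL=f(xyM)\subseteq f(K)$ for the colon alternative --- gives exactly the three alternatives defining a graded $\varphi$-$2$-absorbing primary submodule of $L$. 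Part (4) is identical with every occurrence of $Grad$ deleted. The only points demanding care throughout are the properness checks, the lifting of homogeneous elements along the graded epimorphism, and the degenerate radical cases, all of which are routine once the prime correspondence of the second paragraph is established.
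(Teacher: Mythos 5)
Your proposal is correct and follows essentially the same route as the paper: check properness, transport homogeneous test elements across the graded epimorphism $f$ (using $Ker(f)\subseteq K$ and the hypotheses $\phi(f^{-1}(N))=f^{-1}(\varphi(N))$, $\varphi(f(K))=f(\phi(K))$), and relate the graded radicals on the two sides. The only difference is one of detail: the paper simply asserts the facts $f^{-1}(Grad_{L}(N))\subseteq Grad_{M}(f^{-1}(N))$ and $f(Grad_{M}(K))=Grad_{L}(f(K))$ without proof, whereas you derive them (indeed the full equality in both cases) from the inclusion-preserving bijection between graded prime submodules of $L$ and graded prime submodules of $M$ containing $Ker(f)$, which is a worthwhile addition.
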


\begin{proof}
\begin{enumerate}
\item Since $f$ is epimorphism, $f^{-1}(N)$ is a proper graded $R$-submodule of $M$. Let $x, y\in h(R)$ and $m\in h(M)$ such that $xym\in f^{-1}(N)$ and $xym\notin f^{-1}(\varphi(N))$. Since $xym\in f^{-1}(N)$, $xyf(m)\in N$. Also, $\phi(f^{-1}(N)) = f^{-1}(\varphi(N))$ implies that $xyf(m)\notin \varphi(N)$. Thus $xyf(m)\in N-\varphi(N)$. Then $xy\in (N :_{R} L)$ or $xf(m)\in Grad_{L}(N)$ or $yf(m)\in Grad_{L}(N)$. Thus $xy\in (f^{-1}(N) :_{R} M)$ or $xm\in f^{-1}(Grad_{L}(N))$ or $ym\in f^{-1}(Grad_{L}(N))$. Since $f^{-1}(Grad_{L}(N))\subseteq Grad_{M}(f^{-1}(N))$, we conclude that $f^{-1}(N)$ is a graded $\phi$-$2$-absorbing primary $R$-submodule of $M$.

\item Let $x, y\in h(R)$ and $s\in h(L)$ such that $xys\in f(K)-\varphi(f(K))$. Since $f$ is graded epimorphism, there exists $m\in h(M)$ such that $s =f(m)$. Therefore, $f(xym)\in f(K)$ and so $xym\in K$ as $Ker(f)\subseteq K$. Since $\varphi(f(K)) = f(\phi(K))$, we have $xym\notin \phi(K)$. Hence $xym\in K-\phi(K)$. It implies that $xy\in (K :_{R} M)$ or $xm\in Grad_{M}(K)$ or $ym\in Grad_{M}(K)$. Thus $xy\in (f(K) :_{R}L)$ or $xs\in f(Grad_{M}(K))$ or $ys\in f(Grad_{M}(K))$. Since $Ker(f)\subseteq K$, $f(Grad_{M}(K)) =Grad_{L}(f(K))$, and then we are done.
\end{enumerate}
Similarly, one can easily prove (3) and (4).
\end{proof}

Let $M$ be a $G$-graded $R$-module and $S\subseteq h(R)$ be a multiplicative set. Then $S^{-1}M$ is a $G$-graded $S^{-1}R$-module with $(S^{-1}M)_{g}=\left\{\frac{m}{s},m\in M_{h}, s\in S\cap R_{hg^{-1}}\right\}$ for all $g\in G$, and $(S^{-1}R)_{g}=\left\{\frac{a}{s},a\in R_{h}, s\in S\cap R_{hg^{-1}}\right\}$ for all $g\in G$. If $K$ is a graded $R$-submodule of $M$, then $S^{-1}K$ is a graded $S^{-1}R$-submodule of $S^{-1}M$. Let $\phi: GS(M)\rightarrow GS(M)\bigcup\{\emptyset\}$ be a function and define $\phi_{S} : GS(S^{-1}M)\rightarrow GS(S^{-1}M)\bigcup\{\emptyset\}$ by $\phi_{S}(S^{-1}K) = S^{-1}\phi(K)$, and $\phi_{S}(S^{-1}K) =\emptyset$ when $\phi(K) =\emptyset$ for every graded $R$-submodule $K$ of $M$.

\begin{thm}\label{Theorem 2.11}Let $M$ be a graded $R$-module and $S\subseteq h(R)$ be a multiplicative set.
\begin{enumerate}
\item If $K$ is a graded $\phi$-$2$-absorbing primary $R$-submodule of $M$ and $S^{-1}K\neq S^{-1}M$, then $S^{-1}K$ is a graded $\phi_{S}$-$2$-absorbing primary $S^{-1}R$-submodule of $S^{-1}M$.

\item If $K$ is a graded $\phi$-$2$-absorbing $R$-submodule of $M$ and $S^{-1}K\neq S^{-1}M$, then $S^{-1}K$ is a graded $\phi_{S}$-$2$-absorbing $S^{-1}R$-submodule of $S^{-1}M$.
\end{enumerate}
\end{thm}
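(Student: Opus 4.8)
The plan is to prove the localization statement by transferring the defining condition from $S^{-1}M$ back down to $M$. Both parts have the same structure, so I would prove (1) in full detail and remark that (2) follows by the identical argument with $\mathrm{Grad}_{M}(K)$ replaced by $K$ throughout. The key preliminary observations are that $S^{-1}K$ is automatically a graded $S^{-1}R$-submodule of $S^{-1}M$ (stated in the paragraph preceding the theorem), and that it is proper by the hypothesis $S^{-1}K \neq S^{-1}M$, so it is a legitimate candidate for the definition.

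First I would take arbitrary homogeneous elements $\frac{x}{s}, \frac{y}{t} \in h(S^{-1}R)$ and a homogeneous $\frac{m}{u} \in h(S^{-1}M)$ satisfying
\[
\frac{x}{s}\cdot\frac{y}{t}\cdot\frac{m}{u} = \frac{xym}{stu} \in S^{-1}K - \phi_{S}(S^{-1}K) = S^{-1}K - S^{-1}\phi(K).
\]
Membership $\frac{xym}{stu} \in S^{-1}K$ means there is some $v \in S$ with $vxym \in K$; here $x,y \in h(R)$, $m \in h(M)$, and $v \in S \subseteq h(R)$, so $vy \in h(R)$ and the product $x(vy)m$ is a homogeneous relation in $M$. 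The condition $\frac{xym}{stu} \notin S^{-1}\phi(K)$ is what guarantees $vxym \notin \phi(K)$: if instead $vxym \in \phi(K)$ for the chosen $v$, then $\frac{vxym}{vstu}=\frac{xym}{stu} \in S^{-1}\phi(K)$, a contradiction. Hence $x(vy)m \in K - \phi(K)$ with all factors homogeneous, and I may apply the graded $\phi$-$2$-absorbing primary hypothesis on $K$.

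Applying the definition gives three cases: $xm \in \mathrm{Grad}_{M}(K)$, or $(vy)m \in \mathrm{Grad}_{M}(K)$, or $x(vy) \in (K :_{R} M)$. In the first case, localizing yields $\frac{x}{s}\cdot\frac{m}{u} = \frac{xm}{su} \in S^{-1}\mathrm{Grad}_{M}(K) \subseteq \mathrm{Grad}_{S^{-1}M}(S^{-1}K)$, using that radicals commute with localization up to inclusion. In the second, $\frac{vy\,m}{tu} \in S^{-1}\mathrm{Grad}_{M}(K)$, and since $\frac{v}{1}$ is a unit in $S^{-1}R$ one can cancel it to obtain $\frac{y}{t}\cdot\frac{m}{u} \in \mathrm{Grad}_{S^{-1}M}(S^{-1}K)$. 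In the third, $x(vy) \in (K:_{R}M)$ forces $\frac{xvy}{st} \in (S^{-1}K :_{S^{-1}R} S^{-1}M)$, and again cancelling the unit $\frac{v}{1}$ gives $\frac{x}{s}\cdot\frac{y}{t} \in (S^{-1}K :_{S^{-1}R} S^{-1}M)$. In each case one of the three required conclusions for $S^{-1}K$ holds, completing the proof of (1); part (2) is the same argument dropping the radical.

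The main obstacle I anticipate is the careful bookkeeping of the auxiliary element $v \in S$ and keeping everything homogeneous. Introducing $v$ is unavoidable because membership in a localized submodule only asserts existence of a clearing denominator, and $v$ then attaches itself to one of the factors; the trick is to absorb it into $y$ (so that homogeneity is preserved, as $v \in h(R)$) and later to exploit that $\frac{v}{1}$ is invertible in $S^{-1}R$ to discard it from the conclusions. A secondary point requiring justification is the containment $S^{-1}\mathrm{Grad}_{M}(K) \subseteq \mathrm{Grad}_{S^{-1}M}(S^{-1}K)$, which follows since the image under localization of a graded prime submodule containing $K$ is a graded prime submodule of $S^{-1}M$ containing $S^{-1}K$; this is a standard but necessary ingredient, and only the inclusion (not equality) is needed here.
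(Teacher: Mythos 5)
Your main argument is correct and is essentially the paper's own proof: clear denominators to get $vxym\in K-\phi(K)$ (your explicit check that $vxym\notin\phi(K)$, via $\frac{xym}{stu}=\frac{vxym}{vstu}$, is a point the paper glosses over), apply the graded $\phi$-$2$-absorbing primary hypothesis inside $M$, and push the three possible conclusions back up to $S^{-1}M$. The only cosmetic difference is that you absorb $v$ into the ring factor $y$, which forces you to cancel the unit $\frac{v}{1}$ at the end, whereas the paper absorbs $u$ into the module element $m$ (still homogeneous, since $u\in S\subseteq h(R)$); that choice makes the third conclusion come out as $xy\in(K:_{R}M)$ on the nose and turns your ``cancellation'' into a mere rewriting of fractions. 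Either bookkeeping works.

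There is, however, a genuine flaw in your justification of the containment $S^{-1}Grad_{M}(K)\subseteq Grad_{S^{-1}M}(S^{-1}K)$, which is the one nontrivial ingredient (the paper uses it silently). You argue by extension: ``the image under localization of a graded prime submodule containing $K$ is a graded prime submodule of $S^{-1}M$ containing $S^{-1}K$.'' First, that claim is false as stated: $S^{-1}P$ can equal $S^{-1}M$ (for instance whenever $S\cap(P:_{R}M)\neq\emptyset$), so it need not be prime. Second, and more importantly, even restricting to those $P$ for which $S^{-1}P$ is proper, extension only yields $S^{-1}Grad_{M}(K)\subseteq\bigcap_{P}S^{-1}P$, an intersection over a \emph{subfamily} of the graded prime submodules of $S^{-1}M$ containing $S^{-1}K$; such an intersection contains $Grad_{S^{-1}M}(S^{-1}K)$ rather than being contained in it, so the desired inclusion does not follow. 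The correct argument goes by contraction: for any graded prime $S^{-1}R$-submodule $Q$ of $S^{-1}M$ with $S^{-1}K\subseteq Q$, the preimage $f^{-1}(Q)$ under the localization map $f:M\rightarrow S^{-1}M$ is a proper graded prime $R$-submodule of $M$ containing $K$, hence $Grad_{M}(K)\subseteq f^{-1}(Q)$; then for $n\in Grad_{M}(K)$ and $s\in S$ one has $\frac{n}{s}=\frac{1}{s}\cdot\frac{n}{1}\in Q$ because $Q$ is an $S^{-1}R$-submodule, and intersecting over all such $Q$ gives the containment. With that repair, your proof is complete.
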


\begin{proof}
\begin{enumerate}
\item Let $x/s_{1}, y/s_{2}\in h(S^{-1}R)$ and $m/s\in h(S^{-1}M)$ such that $(x/s_{1})(y/s_{2})(m/s)\in S^{-1}K-\phi_{S}(S^{-1}K)$. Then $x, y\in h(R)$ and $m\in h(M)$ such that $uxym\in K-\phi(K)$ for some $u\in S$, and then $uxm\in Grad_{M}(K)$ or $uym\in Grad_{M}(K)$ or $xy\in (K:_{R}M)$. So, $(x/s_{1})(m/s)=(uxm)/(us_{1}s)\in S^{-1}(Grad_{M}(K))\subseteq Grad_{S^{-1}M}(S^{-1}K)$ or $(y/s_{2})(m/s)=(uym)/(us_{2}s)\in Grad_{S^{-1}M}(S^{-1}K)$ or $(x/s_{1})(y/s_{2})=(xy)/(s_{1}s_{2})\in S^{-1}(K :_{R} M)\subseteq(S^{-1}K :_{S^{-1}R} S^{-1}M)$.
\end{enumerate}
Similarly, one can easily prove (2).
\end{proof}

Let $M_{1}$ be a $G$-graded $R_{1}$-module, $M_{2}$ be a $G$-graded $R_{2}$-module and $R = R_{1}\times R_{2}$. Then $M = M_{1}\times M_{2}$
is $G$-graded $R$-module with $M_{g} = (M_{1})_{g}\times(M_{2})_{g}$ for all $g\in G$, where $R_{g} = (R_{1})_{g}\times(R_{2})_{g}$ for all $g\in G$, (\cite{Nastasescue}).

\begin{lem}(\cite{Saber}, Lemma 3.12) Let $M_{1}$ be a $G$-graded $R_{1}$-module, $M_{2}$ be a $G$-graded $R_{2}$-module, $R = R_{1}\times R_{2}$ and $M = M_{1}\times M_{2}$. Then $L = N\times K$ is a graded $R$-submodule of $M$ if and only if $N$ is a graded $R_{1}$-submodule of $M_{1}$ and $K$ is a graded $R_{2}$-submodule of $M_{2}$.
\end{lem}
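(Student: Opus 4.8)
The plan is to unwind the definition of a graded submodule recalled earlier --- a submodule $L$ of a graded module is graded precisely when every homogeneous component of every element of $L$ again lies in $L$ --- and to combine it with the coordinate-wise grading $M_g = (M_1)_g \times (M_2)_g$ on $M = M_1 \times M_2$. Two elementary facts drive everything, and I would state both at the outset so that the remainder is bookkeeping. First, $N \times K$ is an $R$-submodule of $M$ if and only if $N$ is an $R_1$-submodule of $M_1$ and $K$ is an $R_2$-submodule of $M_2$; this is the standard fact about submodules of a product ring acting on a product module, and it is valid before any grading is mentioned. Second, for $a = \sum_{g} a_g \in M_1$ and $b = \sum_{g} b_g \in M_2$, the degree-$g$ homogeneous component of the pair $(a,b) \in M$ is exactly $(a_g, b_g)$, since the grading on $M$ is defined coordinate-wise.

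For the backward implication I would assume $N$ and $K$ are graded in their respective modules and verify that $L = N \times K$ is a graded $R$-submodule. That $L$ is an $R$-submodule is the first fact above. To see it is graded, take any $(a,b) \in L$, so $a \in N$ and $b \in K$; since $N$ is graded each $a_g \in N$, and since $K$ is graded each $b_g \in K$, whence the degree-$g$ component $(a_g, b_g)$ of $(a,b)$ lies in $N \times K = L$. As every homogeneous component of every element of $L$ stays inside $L$, the submodule $L$ is graded.

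For the forward implication I would assume $L = N \times K$ is a graded $R$-submodule and recover gradedness of each factor by pushing elements in along the natural inclusions. Given $a \in N$, the pair $(a, 0)$ lies in $L$ because $0 \in K$; its degree-$g$ component is $(a_g, 0)$, and gradedness of $L$ forces $(a_g, 0) \in N \times K$, hence $a_g \in N$. Thus every homogeneous component of every element of $N$ lies in $N$, so $N$ is graded; the symmetric argument using pairs $(0,b)$ shows $K$ is graded. That $N$ and $K$ are submodules at all again comes from the first fact.

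I expect no serious obstacle here: the only point requiring care is the explicit description of homogeneous components in a direct product, namely that decomposition into homogeneous parts is carried out coordinate-wise, which is immediate from the definition $M_g = (M_1)_g \times (M_2)_g$. Once that is pinned down, both directions are routine verifications, and the mildly nonobvious step is the use of the zero element of the opposite factor in the forward direction to isolate the gradedness of each component.
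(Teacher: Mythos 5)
Your proposal is correct. Note that the paper does not prove this lemma at all --- it is imported by citation from Saber--Alraqad--Abu-Dawwas (Lemma 3.12 of that reference), so there is no in-paper argument to compare against; your verification (coordinate-wise homogeneous components via $M_g=(M_1)_g\times(M_2)_g$ for the ``if'' direction, and the pairs $(a,0)$, $(0,b)$ to isolate each factor for the ``only if'' direction) is the natural and standard one, and every step, including the implicit use of $(0,0)\in L$ to justify $0\in K$ and $0\in N$, is sound.
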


\begin{lem}\label{4} Let $M_{1}$ be a $G$-graded $R_{1}$-module, $M_{2}$ be a $G$-graded $R_{2}$-module, $R = R_{1}\times R_{2}$ and $M = M_{1}\times M_{2}$. Suppose that $\varphi_{1}:GS(M_{1})\rightarrow GS(M_{1})\bigcup\{\emptyset\}$, $\varphi_{2}:GS(M_{2})\rightarrow GS(M_{2})\bigcup\{\emptyset\}$ be functions and $\phi=\varphi_{1}\times\varphi_{2}$. Assume that $K = K_{1}\times M_{2}$ for some proper graded $R_{1}$-submodule $K_{1}$ of $M_{1}$. If $K$ is a graded $\phi$-$2$-absorbing $R$-submodule of $M$, then $K_{1}$ is a graded $\varphi_{1}$-$2$-absorbing $R_{1}$-submodule of $M_{1}$.
\end{lem}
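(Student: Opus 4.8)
The plan is to prove the statement by a direct lifting argument: given a potential violation of the graded $\varphi_1$-$2$-absorbing condition for $K_1$ inside $M_1$, I will transport it to a witness for the graded $\phi$-$2$-absorbing condition for $K=K_1\times M_2$ inside $M=M_1\times M_2$, and then read the conclusion back down to the first coordinate. First I would note that $K=K_1\times M_2$ is a proper graded $R$-submodule of $M$ precisely because $K_1$ is a proper graded $R_1$-submodule of $M_1$ (and it is graded by the product description of graded submodules quoted just before the statement), so $K_1$ is eligible to be graded $\varphi_1$-$2$-absorbing provided the defining implication holds.

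Next I would take arbitrary $x_1,y_1\in h(R_1)$ and $m_1\in h(M_1)$ with $x_1y_1m_1\in K_1-\varphi_1(K_1)$, say $x_1\in(R_1)_{g}$, $y_1\in(R_1)_{g'}$ and $m_1\in(M_1)_{g''}$, and lift them by setting $x=(x_1,0)$, $y=(y_1,0)$ and $m=(m_1,0)$. The key homogeneity observation is that $0$ belongs to every graded component $(R_2)_h$ and $(M_2)_h$ (each being an additive subgroup), so $x,y\in h(R)$ (of degrees $g,g'$) and $m\in h(M)$ (of degree $g''$), using $R_h=(R_1)_h\times(R_2)_h$ and $M_h=(M_1)_h\times(M_2)_h$. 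Then $xym=(x_1y_1m_1,0)$, and since $x_1y_1m_1\in K_1$ we get $xym\in K_1\times M_2=K$.

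I would then verify $xym\notin\phi(K)$, where $\phi(K)=(\varphi_1\times\varphi_2)(K)=\varphi_1(K_1)\times\varphi_2(M_2)$. If $\phi(K)=\emptyset$ this is immediate; otherwise membership of $(x_1y_1m_1,0)$ in $\varphi_1(K_1)\times\varphi_2(M_2)$ would force $x_1y_1m_1\in\varphi_1(K_1)$, contradicting $x_1y_1m_1\notin\varphi_1(K_1)$. Hence $xym\in K-\phi(K)$, and the graded $\phi$-$2$-absorbing hypothesis on $K$ yields $xm\in K$ or $ym\in K$ or $xy\in(K:_{R}M)$. Finally I would translate each alternative: $xm=(x_1m_1,0)\in K$ gives $x_1m_1\in K_1$; $ym=(y_1m_1,0)\in K$ gives $y_1m_1\in K_1$; and $xy=(x_1y_1,0)\in(K:_{R}M)$ means $(x_1y_1M_1)\times 0\subseteq K_1\times M_2$, i.e.\ $x_1y_1\in(K_1:_{R_1}M_1)$. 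This is exactly the graded $\varphi_1$-$2$-absorbing condition for $K_1$.

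The main obstacle I anticipate is not the algebra but the bookkeeping around the empty-set convention for $\phi=\varphi_1\times\varphi_2$: one must treat the case $\phi(K)=\emptyset$ separately and, when $\phi(K)\neq\emptyset$, argue on the \emph{first} coordinate only so that the choice of the second coordinate ($0$) never interferes. The colon-ideal reduction $(K:_{R}M)\leftrightarrow(K_1:_{R_1}M_1)$ is a routine coordinatewise computation using $M=M_1\times M_2$, and the homogeneity of the lifts $(x_1,0),(y_1,0),(m_1,0)$ is secured by the remark that $0$ is homogeneous of every degree.
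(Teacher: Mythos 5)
Your proof is correct and follows the same overall strategy as the paper's: lift the homogeneous triple from $M_1$ to the product $M = M_1 \times M_2$, invoke the graded $\phi$-$2$-absorbing hypothesis on $K = K_1 \times M_2$, and project the three alternatives back to the first coordinate. The difference lies in the choice of lift, and it is a meaningful one. The paper takes $(x_1,1),(y_1,1)\in R$ and $(m_1,m)\in M$ for an arbitrary $m\in h(M_2)$, whereas you take $(x_1,0),(y_1,0),(m_1,0)$. Your choice is in fact the more careful one: since $1\in (R_2)_e$ and the grading on $R=R_1\times R_2$ is $R_g=(R_1)_g\times (R_2)_g$, the element $(x_1,1)$ is homogeneous only when $x_1\in (R_1)_e$, so for $x_1$ of non-identity degree the paper's lift lands outside $h(R)$ (and likewise $(m_1,m)$ is homogeneous only when $m$ and $m_1$ share the same degree). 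Your observation that $0$ lies in every graded component, so that $(x_1,0)$ is homogeneous of degree $\deg x_1$ regardless, sidesteps this issue entirely, and your remaining verifications --- that $(x_1y_1m_1,0)\notin\varphi_1(K_1)\times\varphi_2(M_2)$ follows from $x_1y_1m_1\notin\varphi_1(K_1)$, and that $(x_1y_1,0)\in (K:_R M)$ reduces to $x_1y_1\in (K_1:_{R_1}M_1)$ --- are exactly right. So your argument establishes the lemma in full generality, while the paper's proof as written is only valid verbatim for homogeneous elements of identity degree.
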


\begin{proof} Let $x_{1}, y_{1}\in h(R_{1})$ and $m_{1}\in h(M_{1})$ such that $x_{1}y_{1}m_{1}\in K_{1}-\varphi_{1}(K_{1})$. Then for any $m\in h(M_{2})$, $(x_{1}, 1), (y_{1}, 1)\in h(R)$ and $(m_{1}, m)\in h(M)$ such that $(x_{1}, 1)(y_{1}, 1)(m_{1},m)\in (K_{1}\times M_{2})-(\varphi_{1}(K_{1})\times \varphi_{2}(M_{2})) = K-\phi(K)$. Since $K$ is a graded $\phi$-$2$-absorbing $R$-submodule of $M$, we get either $(x_{1}, 1)(y_{1}, 1)\in ((K_{1}\times M_{2}) :_{R} M_{1}\times M_{2})$ or $(x_{1}, 1)(m_{1}, m)\in (K_{1}\times M_{2})$ or $(y_{1}, 1)(m_{1}, m)\in (K_{1}\times M_{2})$. So clearly, we conclude that $x_{1}y_{1}\in (K_{1} :_{R_{1}} M_{1})$ or $x_{1}m_{1}\in K_{1}$ or $y_{1}m_{1}\in K_{1}$. Therefore, $K_{1}$ is a graded $\varphi_{1}$-$2$-absorbing $R_{1}$-submodule of $M_{1}$.
\end{proof}

\begin{thm}\label{Theorem 2.16}Let $M_{1}$ be a $G$-graded $R_{1}$-module, $M_{2}$ be a $G$-graded $R_{2}$-module, $R = R_{1}\times R_{2}$ and $M = M_{1}\times M_{2}$. Suppose that $\varphi_{1}:GS(M_{1})\rightarrow GS(M_{1})\bigcup\{\emptyset\}$, $\varphi_{2}:GS(M_{2})\rightarrow GS(M_{2})\bigcup\{\emptyset\}$ be functions and $\phi=\varphi_{1}\times\varphi_{2}$. Assume that $K = K_{1}\times M_{2}$ for some proper graded $R_{1}$-submodule $K_{1}$ of $M_{1}$. Then the following hold:
\begin{enumerate}
\item If $\varphi_{2}(M_{2}) = M_{2}$, then $K$ is a graded $\phi$-$2$-absorbing $R$-submodule of $M$ if and only if $K_{1}$ is a graded $\varphi_{1}$-$2$-absorbing $R_{1}$-submodule of $M_{1}$.

\item If $\varphi_{2}(M_{2})\neq M_{2}$, then $K$ is a graded $\phi$-$2$-absorbing $R$-submodule of $M$ if and only if $K_{1}$ is a graded $2$-absorbing $R_{1}$-submodule of $M_{1}$.
\end{enumerate}
\end{thm}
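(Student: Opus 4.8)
The plan is to handle both parts by reducing the three-way alternative over $M$ to the corresponding alternative over $M_1$, using two bookkeeping identities, and to invoke Lemma~\ref{4} for whatever is already available. First I would record the computations that make the argument mechanical. For homogeneous $(x_1,x_2),(y_1,y_2)\in h(R)$ and $(m_1,m_2)\in h(M)$ one has $(x_1,x_2)(y_1,y_2)(m_1,m_2)=(x_1y_1m_1,\,x_2y_2m_2)$; since $K=K_1\times M_2$ and the second coordinate is always absorbed into $M_2$, membership $(x_1m_1,x_2m_2)\in K$ is equivalent to $x_1m_1\in K_1$, and likewise $(x_1y_1,x_2y_2)\in(K:_R M)$ is equivalent to $x_1y_1\in(K_1:_{R_1}M_1)$. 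Thus each of the three possible conclusions over $M$ collapses to the matching conclusion over $M_1$, and conversely.

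With these identities in hand, part~(1) is quick. Its forward implication is precisely the content of Lemma~\ref{4}. For the converse I would assume $K_1$ is graded $\varphi_1$-$2$-absorbing and take a homogeneous triple whose product lies in $K-\phi(K)$; because $\varphi_2(M_2)=M_2$ we have $\phi(K)=\varphi_1(K_1)\times M_2$, so lying outside $\phi(K)$ forces $x_1y_1m_1\in K_1-\varphi_1(K_1)$, and applying the hypothesis on $K_1$ and translating back gives the claim. The converse of part~(2) is even more direct: if $K_1$ is graded $2$-absorbing, then from $x_1y_1m_1\in K_1$ alone, which already follows from the product lying in $K$, the three-way alternative holds for $K_1$ with no $\varphi_1$-restriction and transports to $K$ verbatim.

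The substantive direction is the forward implication of part~(2), and this is exactly where the hypothesis $\varphi_2(M_2)\neq M_2$ is used. Given $x_1,y_1\in h(R_1)$ and $m_1\in h(M_1)$ with $x_1y_1m_1\in K_1$, I would split on whether $x_1y_1m_1\in\varphi_1(K_1)$. If it is not, Lemma~\ref{4} applies directly. If it is, I choose a homogeneous $m_2\in M_2$ with $m_2\notin\varphi_2(M_2)$, which exists because $\varphi_2(M_2)$ is either $\emptyset$ or a proper graded submodule of $M_2$, and pass to the lifted triple $(x_1,1),(y_1,1),(m_1,m_2)$, whose product is $(x_1y_1m_1,m_2)$. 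This element lies in $K$, and precisely because its second coordinate escapes $\varphi_2(M_2)$ it avoids $\phi(K)=\varphi_1(K_1)\times\varphi_2(M_2)$ no matter what the first coordinate does; hence the graded $\phi$-$2$-absorbing property of $K$ applies and, after the translation recorded above, yields the $2$-absorbing alternative for $K_1$. I expect this construction to be the main obstacle: one must check that the escape element genuinely forces $(x_1y_1m_1,m_2)$ out of $\phi(K)$ even when the first coordinate is swallowed by $\varphi_1(K_1)$. This is exactly the step that breaks down when $\varphi_2(M_2)=M_2$, and it explains why part~(1) can only conclude that $K_1$ is $\varphi_1$-$2$-absorbing rather than $2$-absorbing.
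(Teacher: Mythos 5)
Your proposal is correct and follows essentially the same route as the paper: part (1) is handled via Lemma~\ref{4} together with the observation that $\phi(K)=\varphi_{1}(K_{1})\times M_{2}$ when $\varphi_{2}(M_{2})=M_{2}$, and the substantive direction of part (2) uses exactly the paper's key construction of lifting a triple by a homogeneous element $m_{2}\in M_{2}-\varphi_{2}(M_{2})$ so that the product escapes $\phi(K)=\varphi_{1}(K_{1})\times\varphi_{2}(M_{2})$. The only cosmetic differences are that you run part (2) directly with a case split on whether $x_{1}y_{1}m_{1}\in\varphi_{1}(K_{1})$ where the paper argues by contradiction, and you verify the converse of part (2) by direct coordinate bookkeeping where the paper cites an external product theorem for graded $2$-absorbing submodules.
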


\begin{proof}
\begin{enumerate}
\item Suppose that $K_{1}$ is a graded $\varphi_{1}$-$2$-absorbing $R_{1}$-submodule of $M_{1}$. Let $x=(x_{1}, x_{2}), y=(y_{1}, y_{2})\in h(R)$ and $m=(m_{1}, m_{2})\in h(M)$ such that $xym\in K-\phi(K)$. Since $\varphi_{2}(M_{2}) = M_{2}$, we get that $x_{1}, y_{1}\in h(R_{1})$ and $m_{1}\in h(M_{1})$ such that $x_{1}y_{1}m_{1}\in K_{1}-\varphi_{1}(K_{1})$, and this implies that either $x_{1}y_{1}\in (K_{1} :_{R_{1}}M_{1})$ or $x_{1}m_{1}\in K_{1}$ or $y_{1}m_{1}\in K_{1}$. Thus either $xy\in (K :_{R} M)$ or $xm\in K$ or $ym\in K$. Hence, $K$ is a graded $\phi$-$2$-absorbing $R$-submodule of $M$. The converse holds from Lemma \ref{4}.

\item Suppose that $K$ is a graded $\phi$-$2$-absorbing $R$-submodule of $M$. Since $\varphi_{2}(M_{2})\neq M_{2}$, there exists $m_{2}\in M_{2}-\varphi_{2}(M_{2})$ and then there exists $g\in G$ such that $(m_{2})_{g}\in M_{2}-\varphi_{2}(M_{2})$. Assume that $K_{1}$ is not a graded $2$-absorbing $R_{1}$-submodule of $M_{1}$. By Lemma \ref{4}, $K_{1}$ is a graded $\varphi_{1}$-$2$-absorbing $R_{1}$-submodule of $M_{1}$. Hence, there exist $x_{1}, y_{1}\in h(R_{1})$ and $m_{1}\in h(M_{1})$ such that $x_{1}y_{1}m_{1}\in \varphi_{1}(K_{1})$, $x_{1}y_{1}\notin (K_{1}:_{R_{1}}M_{1})$, $x_{1}m_{1}\notin K_{1}$ and $y_{1}m_{1}\notin K_{1}$. So, $(x_{1}, 1)(y_{1}, 1)(m_{1},(m_{2})_{g})\in (K_{1}\times M_{2})-(\varphi_{1}(K_{1})\times \varphi_{2}(M_{2})) = (K_{1}\times M_{2})-\phi(K_{1}\times M_{2})$ which implies that $x_{1}y_{1}\in (K_{1} :_{R_{1}} M_{1})$ or $x_{1}m_{1}\in K_{1}$ or $y_{1}m_{1}\in K_{1}$, which is a contradiction. Thus $K_{1}$ is a graded $2$-absorbing $R_{1}$-submodule of $M_{1}$. Conversely, if $K_{1}$ is a graded $2$-absorbing $R_{1}$-submodule of $M_{1}$, then $K = K_{1}\times M_{2}$ is a graded $2$-absorbing $R$-submodule of $M$ by (\cite{Zoubi Azaizeh}, Theorem 3.3). Hence $K$ is a graded $\phi$-$2$-absorbing $R$-submodule of $M$ for any $\phi$.
\end{enumerate}
\end{proof}

\begin{lem} Let $M_{1}$ be a $G$-graded $R_{1}$-module, $M_{2}$ be a $G$-graded $R_{2}$-module, $R = R_{1}\times R_{2}$ and $M = M_{1}\times M_{2}$. Suppose that $\varphi_{1}:GS(M_{1})\rightarrow GS(M_{1})\bigcup\{\emptyset\}$, $\varphi_{2}:GS(M_{2})\rightarrow GS(M_{2})\bigcup\{\emptyset\}$ be functions and $\phi=\varphi_{1}\times\varphi_{2}$. Assume that $K = K_{1}\times M_{2}$ for some proper graded $R_{1}$-submodule $K_{1}$ of $M_{1}$. If $K$ is a graded $\phi$-$2$-absorbing primary $R$-submodule of $M$, then $K_{1}$ is a graded $\varphi_{1}$-$2$-absorbing primary $R_{1}$-submodule of $M_{1}$.
\end{lem}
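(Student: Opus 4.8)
The plan is to follow the proof of Lemma \ref{4} almost verbatim in its opening moves, the only genuinely new feature being that the primary condition produces the graded radical $Grad_{M}(K)$ rather than $K$ itself. First I would take homogeneous $x_{1}, y_{1}\in h(R_{1})$ and $m_{1}\in h(M_{1})$ with $x_{1}y_{1}m_{1}\in K_{1}-\varphi_{1}(K_{1})$, and fix an arbitrary $m\in h(M_{2})$. Then $(x_{1},1),(y_{1},1)\in h(R)$ and $(m_{1},m)\in h(M)$ satisfy
\[(x_{1},1)(y_{1},1)(m_{1},m)=(x_{1}y_{1}m_{1},m)\in(K_{1}\times M_{2})-(\varphi_{1}(K_{1})\times\varphi_{2}(M_{2}))=K-\phi(K),\]
where the non-membership in $\phi(K)$ comes solely from $x_{1}y_{1}m_{1}\notin\varphi_{1}(K_{1})$ in the first coordinate (regardless of $\varphi_{2}(M_{2})$).

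Since $K$ is graded $\phi$-$2$-absorbing primary, one of three alternatives holds: $(x_{1}y_{1},1)\in(K:_{R}M)$, or $(x_{1}m_{1},m)\in Grad_{M}(K)$, or $(y_{1}m_{1},m)\in Grad_{M}(K)$. The colon-ideal alternative is routine and identical to Lemma \ref{4}: $(x_{1}y_{1},1)(M_{1}\times M_{2})\subseteq K_{1}\times M_{2}$ forces $x_{1}y_{1}M_{1}\subseteq K_{1}$, i.e. $x_{1}y_{1}\in(K_{1}:_{R_{1}}M_{1})$. All the real work is in the two radical alternatives, and for these the key step is the identity
\[Grad_{M}(K_{1}\times M_{2})=Grad_{M_{1}}(K_{1})\times M_{2}.\]
Granting this, $(x_{1}m_{1},m)\in Grad_{M}(K)$ reads off in the first coordinate as $x_{1}m_{1}\in Grad_{M_{1}}(K_{1})$, and likewise $y_{1}m_{1}\in Grad_{M_{1}}(K_{1})$; combining the three cases gives exactly the graded $\varphi_{1}$-$2$-absorbing primary condition for $K_{1}$.

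I expect the radical identity to be the main obstacle, though it is self-contained. Any graded prime $P$ of $M$ containing $K_{1}\times M_{2}$ splits as $P=N_{1}\times N_{2}$ via the orthogonal idempotents $(1,0),(0,1)\in R_{e}$, and the inclusion $\{0\}\times M_{2}\subseteq P$ forces $N_{2}=M_{2}$; a short lifting argument, testing primeness on elements of the form $(r_{1},0)$ and $(a_{1},0)$, then shows that $N_{1}\times M_{2}$ is graded prime in $M$ precisely when $N_{1}$ is graded prime in $M_{1}$. Hence the graded primes of $M$ lying above $K_{1}\times M_{2}$ are exactly the $P_{1}\times M_{2}$ with $K_{1}\subseteq P_{1}$, so intersecting gives $\big(\bigcap_{K_{1}\subseteq P_{1}}P_{1}\big)\times M_{2}=Grad_{M_{1}}(K_{1})\times M_{2}$, with the degenerate case (no graded prime above $K_{1}$) matching the conventions $Grad_{M_{1}}(K_{1})=M_{1}$ and $Grad_{M}(K)=M$. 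This structural computation is the only place where content enters; everything else is bookkeeping parallel to Lemma \ref{4}.
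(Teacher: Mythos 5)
Your proposal is correct and follows essentially the same route as the paper, whose entire proof of this lemma is the remark that the argument of Lemma \ref{4} carries over; your write-up is exactly that adaptation. The one substantive ingredient you add --- the identity $Grad_{M}(K_{1}\times M_{2})=Grad_{M_{1}}(K_{1})\times M_{2}$, obtained from the splitting of graded prime submodules of $M_{1}\times M_{2}$ via the idempotents $(1,0)$ and $(0,1)$ --- is precisely the detail the paper leaves implicit, and your sketch of it is sound.
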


\begin{proof} It can be easily proved by using a similar argument in the proof of Lemma \ref{4}.
\end{proof}

\begin{thm}\label{Theorem 2.17}Let $M_{1}$ be a $G$-graded $R_{1}$-module, $M_{2}$ be a $G$-graded $R_{2}$-module, $R = R_{1}\times R_{2}$ and $M = M_{1}\times M_{2}$. Suppose that $\varphi_{1}:GS(M_{1})\rightarrow GS(M_{1})\bigcup\{\emptyset\}$, $\varphi_{2}:GS(M_{2})\rightarrow GS(M_{2})\bigcup\{\emptyset\}$ be functions and $\phi=\varphi_{1}\times\varphi_{2}$. Assume that $K = K_{1}\times M_{2}$ for some proper graded $R_{1}$-submodule $K_{1}$ of $M_{1}$. Then the following hold:
\begin{enumerate}
\item If $\varphi_{2}(M_{2}) = M_{2}$, then $K$ is a graded $\phi$-$2$-absorbing primary $R$-submodule of $M$ if and only if $K_{1}$ is a graded $\varphi_{1}$-$2$-absorbing primary $R_{1}$-submodule of $M_{1}$.

\item If $\varphi_{2}(M_{2})\neq M_{2}$, then $K$ is a graded $\phi$-$2$-absorbing primary $R$-submodule of $M$ if and only if $K_{1}$ is a graded $2$-absorbing primary $R_{1}$-submodule of $M_{1}$.
\end{enumerate}
\end{thm}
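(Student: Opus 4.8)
The plan is to mirror the proof of Theorem \ref{Theorem 2.16}, replacing membership in $K$ by membership in the graded radical throughout. The two structural identities that drive everything are $(K :_{R} M) = (K_{1} :_{R_{1}} M_{1})\times R_{2}$ and $Grad_{M}(K) = Grad_{M_{1}}(K_{1})\times M_{2}$; I would establish these first, since every coordinate-wise translation below rests on them. I would also use throughout that $\phi(K) = \varphi_{1}(K_{1})\times\varphi_{2}(M_{2})$.

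For (1), assume $\varphi_{2}(M_{2}) = M_{2}$ and suppose first that $K_{1}$ is graded $\varphi_{1}$-$2$-absorbing primary. Take $x = (x_{1}, x_{2})$, $y = (y_{1}, y_{2})\in h(R)$ and $m = (m_{1}, m_{2})\in h(M)$ with $xym\in K-\phi(K)$. Because $\varphi_{2}(M_{2}) = M_{2}$, the second coordinate $x_{2}y_{2}m_{2}$ of $xym$ automatically lies in $\varphi_{2}(M_{2})$, so $xym\notin\phi(K)$ forces $x_{1}y_{1}m_{1}\in K_{1}-\varphi_{1}(K_{1})$. Applying the hypothesis on $K_{1}$ yields $x_{1}y_{1}\in (K_{1} :_{R_{1}} M_{1})$ or $x_{1}m_{1}\in Grad_{M_{1}}(K_{1})$ or $y_{1}m_{1}\in Grad_{M_{1}}(K_{1})$; the two structural identities then translate each alternative into $xy\in (K :_{R} M)$, $xm\in Grad_{M}(K)$, or $ym\in Grad_{M}(K)$, respectively. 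The converse is exactly the preceding lemma.

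For (2), assume $\varphi_{2}(M_{2})\neq M_{2}$. For the forward direction I argue by contradiction exactly as in Theorem \ref{Theorem 2.16}(2): choosing a homogeneous $(m_{2})_{g}\in M_{2}-\varphi_{2}(M_{2})$ and supposing $K_{1}$ fails to be graded $2$-absorbing primary, the preceding lemma guarantees $K_{1}$ is at least graded $\varphi_{1}$-$2$-absorbing primary, so there must exist a witness $x_{1}y_{1}m_{1}\in\varphi_{1}(K_{1})$ with $x_{1}y_{1}\notin (K_{1} :_{R_{1}} M_{1})$, $x_{1}m_{1}\notin Grad_{M_{1}}(K_{1})$ and $y_{1}m_{1}\notin Grad_{M_{1}}(K_{1})$. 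Then $(x_{1},1)(y_{1},1)(m_{1},(m_{2})_{g}) = (x_{1}y_{1}m_{1},(m_{2})_{g})$ lies in $K-\phi(K)$, since its first coordinate is in $\varphi_{1}(K_{1})$ while its second is outside $\varphi_{2}(M_{2})$, and feeding it into the graded $\phi$-$2$-absorbing primary property of $K$ returns, via the two structural identities, one of the three conclusions on $(x_{1},y_{1},m_{1})$, contradicting the choice of the witness. For the converse, if $K_{1}$ is graded $2$-absorbing primary then $K = K_{1}\times M_{2}$ is graded $2$-absorbing primary (the primary analogue of the product result invoked in Theorem \ref{Theorem 2.16}(2), which one can also check directly by the same coordinate translation together with $Grad_{M}(K) = Grad_{M_{1}}(K_{1})\times M_{2}$), and hence graded $\phi$-$2$-absorbing primary for every $\phi$ by Remark \ref{Lemma 2.2}(4).

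The main obstacle is the radical identity $Grad_{M}(K) = Grad_{M_{1}}(K_{1})\times M_{2}$, which is genuinely new compared with the non-primary case of Theorem \ref{Theorem 2.16}, where only membership in the colon ideal and in $K$ itself was needed. To obtain it I would use that every graded prime submodule of $M_{1}\times M_{2}$ over $R_{1}\times R_{2}$ has the form $P_{1}\times M_{2}$ or $M_{1}\times P_{2}$ with $P_{i}$ graded prime; those of the second shape cannot contain $K_{1}\times M_{2}$, as that would force the proper submodule $P_{2}$ to equal $M_{2}$, so the graded primes lying over $K$ are precisely the $P_{1}\times M_{2}$ with $K_{1}\subseteq P_{1}$, and intersecting them gives the claim. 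The edge case in which no graded prime contains $K_{1}$ makes both sides collapse to $M$ under the convention $Grad_{M_{1}}(K_{1}) = M_{1}$. Once this identity and the colon identity are in hand, every remaining step is the coordinate-wise bookkeeping already rehearsed in Theorem \ref{Theorem 2.16}.
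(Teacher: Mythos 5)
Your proposal is correct and follows the same coordinate-wise skeleton as the paper's proof, but it is substantially more self-contained, because the paper's own proof consists almost entirely of deferrals: part (1) is dismissed as ``a similar argument to Theorem \ref{Theorem 2.16}(1)'', the forward direction of part (2) as ``similar to Theorem \ref{Theorem 2.16}(2)'', and the converse of part (2) is outsourced to an external result ((\cite{Ece Yetkin Celikel}, Theorem 18), asserting that $K_{1}\times M_{2}$ is graded $2$-absorbing primary whenever $K_{1}$ is). What you do differently is to isolate and actually prove the one ingredient that makes ``similar'' work in the primary setting, namely the identity $Grad_{M}(K_{1}\times M_{2})=Grad_{M_{1}}(K_{1})\times M_{2}$, obtained from the classification of graded prime submodules of $M_{1}\times M_{2}$ as $P_{1}\times M_{2}$ or $M_{1}\times P_{2}$ (the latter never containing $K_{1}\times M_{2}$), including the degenerate case where no graded prime contains $K_{1}$. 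The paper uses this identity tacitly---both in the unnamed lemma preceding the theorem and in every translation between $Grad_{M}(K)$ and $Grad_{M_{1}}(K_{1})$---but never states or justifies it anywhere. Your direct verification of the converse of (2) likewise replaces the citation of Celikel's theorem by the same coordinate bookkeeping plus the radical identity, and your appeal to Remark \ref{Lemma 2.2}(4) to pass from $2$-absorbing primary to $\phi$-$2$-absorbing primary matches the paper. In short, the paper's route buys brevity; yours buys a complete argument in which the only fact genuinely new relative to Theorem \ref{Theorem 2.16} is made explicit and proved.

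One caveat, which you inherit from the paper rather than introduce: the witness argument in the forward direction of (2), and the preceding lemma invoked for the converse of (1), treat $(x_{1},1)$, $(y_{1},1)$ and $(m_{1},(m_{2})_{g})$ as homogeneous elements of $R_{1}\times R_{2}$ and $M_{1}\times M_{2}$. Under the gradings $R_{g}=(R_{1})_{g}\times(R_{2})_{g}$ and $M_{g}=(M_{1})_{g}\times(M_{2})_{g}$, the pair $(x_{1},1)$ is homogeneous only when $\deg x_{1}=e$, and $(m_{1},(m_{2})_{g})$ only when $\deg m_{1}=g$; the ring entries can be repaired by using $(x_{1},0)$ and $(y_{1},0)$ instead, while the module entry requires a homogeneous element of $M_{2}$ outside $\varphi_{2}(M_{2})$ in the degree dictated by $m_{1}$, which does not follow from $\varphi_{2}(M_{2})\neq M_{2}$ alone. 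Since this defect sits equally in the paper's template (Lemma \ref{4} and Theorem \ref{Theorem 2.16}(2)), and your two structural identities are proved with honestly homogeneous elements such as $(r_{1},0)$, your write-up is no worse off than the paper's on this point.
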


\begin{proof}
\begin{enumerate}
\item It can be easily proved by using a similar argument in the proof of Theorem \ref{Theorem 2.16} (1).

\item Suppose that $K_{1}$ is a graded $2$-absorbing primary $R_{1}$-submodule of $M_{1}$. Then $K = K_{1}\times M_{2}$ is a graded $2$-absorbing primary $R$-submodule of $M$ by (\cite{Ece Yetkin Celikel}, Theorem 18). Hence $K$ is a graded $\phi$-$2$-absorbing $R$-submodule of $M$ for any $\phi$. The remaining of this proof is similar to Theorem \ref{Theorem 2.16} (2).
\end{enumerate}
\end{proof}

\begin{thm}\label{Theorem 2.20} Let $M$ be a $G$-graded $R$-module $g\in G$ and $K$ be a $g$-$\phi$-primary $R$-submodule of $M$. Suppose that $x\in R_{e}$ and $m\in M_{g}$ such that $xm\in \phi(K)$, $x\notin Grad((K:_{R}M))$ and $m\notin K$. Then
\begin{enumerate}
\item $xK_{g}\subseteq \phi(K)$.

\item $(K :_{R_{e}} M)m\subseteq\phi(K)$.

\item $(K :_{R_{e}} M)K_{g}\subseteq\phi(K)$.
\end{enumerate}
\end{thm}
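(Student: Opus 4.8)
The plan is to prove all three inclusions by a single device: given an element that is asserted to lie in $\phi(K)$ but which I provisionally assume does not, I perturb $m$ (or $x$) by it, feed the perturbed product back into the $g$-$\phi$-primary hypothesis, and extract a contradiction from the two standing assumptions $m\notin K$ and $x\notin Grad((K:_{R}M))$. The facts I will lean on throughout are that $\phi(K)$ is a graded $R$-submodule, hence closed under sums, differences and multiplication by $R_{e}$; that $\phi(K)\subseteq K$ by Remark \ref{Lemma 2.2}(5); and that $(K:_{R_{e}}M)\subseteq(K:_{R}M)\subseteq Grad((K:_{R}M))$, where the last set is a graded ideal and so is itself closed under differences.

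For (1) I would fix an arbitrary $k\in K_{g}$, assume $xk\notin\phi(K)$, and examine $x(m+k)=xm+xk$. Since $m+k\in M_{g}$ and both $xm$ (given) and $xk$ (as $k\in K$) lie in $K$, the product lies in $K$; and it cannot lie in $\phi(K)$, since otherwise $xk=x(m+k)-xm$ would lie in $\phi(K)$ too. Hence $x(m+k)\in K-\phi(K)$, and the $g$-$\phi$-primary property forces $m+k\in K$ (whence $m\in K$) or $x\in Grad((K:_{R}M))$, both impossible. Part (2) runs the same way with the perturbation on the scalar side: for $r\in(K:_{R_{e}}M)$ with $rm\notin\phi(K)$, I would look at $(x+r)m=xm+rm\in K-\phi(K)$ with $x+r\in R_{e}$; the hypothesis then yields $x+r\in Grad((K:_{R}M))$, and subtracting $r\in Grad((K:_{R}M))$ contradicts $x\notin Grad((K:_{R}M))$.

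For (3) I would combine the first two parts: for $r\in(K:_{R_{e}}M)$ and $k\in K_{g}$, assume $rk\notin\phi(K)$ and expand
$$(x+r)(m+k)=xm+xk+rm+rk.$$
The first three summands lie in $\phi(K)$ by the hypothesis on $xm$, by part (1), and by part (2) respectively, so the whole product lies in $K$ but not in $\phi(K)$ (else $rk$ would too). Applying the $g$-$\phi$-primary property to $x+r\in R_{e}$ and $m+k\in M_{g}$ then contradicts either $m\notin K$ (via $m+k\in K$) or $x\notin Grad((K:_{R}M))$ (via $x+r\in Grad((K:_{R}M))$), exactly as before.

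The argument is mechanical once the correct perturbation is chosen; the only recurring subtlety, and the step I would watch most carefully, is confirming at each stage that the perturbed product escapes $\phi(K)$, which always reduces to $\phi(K)$ being a submodule so that the unwanted term is recoverable as a difference of two of its members. Part (3) is the only place needing any bookkeeping, since it consumes the conclusions of (1) and (2); no genuinely new idea is required beyond ordering the proof so those are available.
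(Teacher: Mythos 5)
Your proposal is correct and follows essentially the same argument as the paper: the same perturbations $x(m+k)$, $(x+r)m$, and $(x+r)(m+k)$, with parts (1) and (2) feeding into (3). The only difference is that you spell out explicitly (via $\phi(K)$ being a submodule closed under differences) why each perturbed product escapes $\phi(K)$, a step the paper leaves implicit.
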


\begin{proof}
\begin{enumerate}
\item Suppose that $xK_{g}\nsubseteq\phi(K)$. Then there exists $k\in K_{g}$ such that $xk\notin \phi(K)$, and then $x(m + k)\notin \phi(K)$. Since $x(m + k)\in K$ and $x\notin Grad((K :_{R} M))$, we deduce that $m + k\in K$ as $K$ is a $g$-$\phi$-primary $R$-submodule of $M$. So $m\in K$, which is a contradiction. So, $xK_{g}\subseteq\phi(K)$.

\item Suppose that $(K :_{R_{e}} M)m\nsubseteq\phi(K)$. Then there exists $y\in (K:_{R_{e}}M)$ such that $ym\notin \phi(K)$, and then $(x + y)m\notin \phi(K)$ as $xm\in \phi(K)$. Since $ym\in K$, we get $(x + y)m\in K$. Since $m\notin K$, we have that $x + y\in Grad((K :_{R} M))$. Hence
    $x\in Grad((K :_{R} M))$, which is a contradiction.

\item Suppose that $(K :_{R_{e}} M)K_{g}\nsubseteq\phi(K)$. Then there exist $y\in (K :_{R_{e}} M)$ and $k\in K_{g}$ such that $yk\notin \phi(K)$. By (1) and (2), $(x+y)(m+k)\in K-\phi (K)$. So, either $x+y\in Grad((K :_{R} M))$ or $m + k\in K$. Thus we have either $x\in Grad((K :_{R} M))$ or $m\in K$, which is a contradiction.
\end{enumerate}
\end{proof}

\begin{rem}Note that if $K$ is a $g$-$\phi$-primary $R$-submodule of $M$ which is not $g$-primary, then there exist $x\in R_{e}$ and $m\in M_{g}$ such that $xm\in \phi(K)$, $x\notin Grad((K:_{R}M))$ and $m\notin K$. So, every $g$-$\phi$-primary $R$-submodule, which is not $g$-primary, satisfies Theorem \ref{Theorem 2.20}.
\end{rem}

\begin{thm}\label{Theorem 2.24} Let $M$ be a $G$-graded $R$-module, $g\in G$ and $K$ be a $g$-$\phi$-$2$-absorbing $R$-submodule of $M$. Suppose that $x, y\in R_{e}$ and $m\in M_{g}$ such that $xym\in \phi(K)$, $xy\notin (K:_{R}M)$, $xm\notin K$ and $ym\notin K$. Then
\begin{enumerate}
\item $xyK_{g}\subseteq\phi(K)$.

\item $x(K :_{R_{e}} M)m\subseteq\phi(K)$.

\item $y(K :_{R_{e}} M)m\subseteq\phi(K)$.

\item $(K :_{R_{e}} M)^{2}m\subseteq\phi(K)$.
\end{enumerate}
\end{thm}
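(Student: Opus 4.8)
The plan is to mirror the structure of the proof of Theorem \ref{Theorem 2.20}, exploiting the defining property of a $g$-$\phi$-$2$-absorbing submodule together with the standing hypothesis that all three ``absorbing'' alternatives fail for the triple $x, y, m$. Throughout I will use that $\phi(K)$ is a graded $R$-submodule (hence closed under addition and the $R_{e}$-action), that $K_{g} \subseteq M_{g}$ and $K_{g} \subseteq K$, and that whenever $r \in (K :_{R_{e}} M)$ one has $rm \in K$ for $m \in M_{g}$ and $rs \in (K :_{R} M)$ for any $s \in h(R)$.

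For (1), I would argue by contradiction: assume some $k \in K_{g}$ has $xyk \notin \phi(K)$ and consider $m + k \in M_{g}$. Since $xyk \in K$ and $xym \in \phi(K) \subseteq K$, the product $xy(m+k)$ lies in $K$; and it cannot lie in $\phi(K)$, for otherwise $xyk = xy(m+k) - xym \in \phi(K)$. Thus $xy(m+k) \in K - \phi(K)$, and the $g$-$\phi$-$2$-absorbing property forces $x(m+k) \in K$, $y(m+k) \in K$, or $xy \in (K :_{R} M)$. The last is excluded by hypothesis, while the first two collapse (after subtracting $xk, yk \in K$) to $xm \in K$ or $ym \in K$, again contradicting the hypotheses.

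For (2) and (3), which are symmetric in $x$ and $y$, I would again argue by contradiction, but now perturb the coefficient rather than the module element. For (2), suppose $r \in (K :_{R_{e}} M)$ satisfies $xrm \notin \phi(K)$ and apply the definition to the triple $x,\, y+r,\, m$. As before $x(y+r)m = xym + xrm$ lies in $K$ but not in $\phi(K)$, so one of $xm \in K$, $(y+r)m \in K$, $x(y+r) \in (K :_{R} M)$ holds; the first two reduce to the forbidden $xm \in K$, $ym \in K$ after subtracting terms in $K$, and the third gives $xy = x(y+r) - xr \in (K :_{R} M)$ since $xr \in (K :_{R} M)$, the final contradiction. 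Part (3) is identical with the roles of $x$ and $y$ interchanged.

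Part (4) is where the earlier parts are used and is the main point of the argument. Assuming $r, s \in (K :_{R_{e}} M)$ with $rsm \notin \phi(K)$, I would expand $(x+r)(y+s)m = xym + xsm + rym + rsm$ and observe that the first three summands lie in $\phi(K)$: $xym$ by hypothesis, $xsm$ by (2), and $rym$ by (3). Hence $(x+r)(y+s)m \in K$ but $\notin \phi(K)$, so the definition applies to $x+r,\, y+s,\, m$. The alternatives $(x+r)m \in K$ and $(y+s)m \in K$ reduce to $xm \in K$ and $ym \in K$, while $(x+r)(y+s) \in (K :_{R} M)$ yields $xy \in (K :_{R} M)$ once the cross terms $xs, ry, rs$, all visibly in $(K :_{R} M)$, are subtracted off --- a contradiction in every case. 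The only delicate point throughout is the bookkeeping that keeps each perturbed product simultaneously inside $K$ and outside $\phi(K)$, together with the repeated use that a product of an element of $(K :_{R_{e}} M)$ with any homogeneous ring element stays in $(K :_{R} M)$; once these are in place, the three forbidden alternatives close off every branch.
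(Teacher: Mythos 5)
Your proposal is correct and follows essentially the same argument as the paper: contradiction in each part, perturbing to $m+k$ in (1), to $y+r$ (resp. $x+r$) in (2)–(3), and to $(x+r)(y+s)$ in (4), with parts (2) and (3) invoked exactly as in the paper to keep $(x+r)(y+s)m$ outside $\phi(K)$. Your write-up is in fact slightly more careful than the paper's about the bookkeeping (why each perturbed product stays in $K-\phi(K)$ and why the cross terms subtract off), but there is no substantive difference in method.
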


\begin{proof}
\begin{enumerate}
\item Suppose that $xyK_{g}\nsubseteq\phi(K)$. Then there exists $k\in K_{g}$ with $xyk\notin\phi(K)$, and then $xy(m+k)\notin\phi(K)$. Since $xy(m + k) = xym + xyk\in K$ and $xy\notin(K :_{R} M)$, we conclude that $x(m + k)\in K$ or $y(m + k)\in K$. So, $xm\in K$ or $ym\in K$, which is a contradiction. Thus $xyK\subseteq\phi(K)$.
    
\item Suppose that $x(K :_{R_{e}} M)m\nsubseteq\phi(K)$. Then there exists $a\in (K:_{R_{e}}M)$ such that $xam\notin \phi(K)$, and then $x(y + a)m\notin \phi(K)$ as $xym\in \phi(K)$. Since $am\in K$, we obtain that $x(y + a)m\in K$. Then $xm\in K$ or $(y + a)m\in K$ or $x(y + a)\in (K :_{R} M)$.
Hence $xm\in K$ or $ym\in K$ or $xy\in (K :_{R} M)$, which is a contradiction. Hence, $x(K :_{R_{e}} M)m\subseteq\phi(K)$.

\item It can be easily proved by using a similar argument in the proof of part (2).

\item Assume that $(K :_{R_{e}} M)^{2}m\nsubseteq\phi(K)$. Then there exist $a, b\in (K :_{R_{e}} M)$ such that $abm\notin \phi(K)$, and then by parts (2) and (3), $(x + a)(y + b)m\notin \phi(K)$. Clearly, $(x + a)(y + b)m\in K$. Then $(x + a)m\in K$ or $(y + b)m\in K$ or $(x + a)(y + b)\in (K :_{R} M)$. Therefore, $xm\in K$ or $ym\in K$ or $xy\in (K :_{R} M)$, which is a contradiction. Consequently, $(K :_{R_{e}} M)^{2}m\subseteq\phi(K)$.
\end{enumerate}
\end{proof}

\begin{rem}Note that if $K$ is a $g$-$\phi$-$2$-absorbing $R$-submodule of $M$ which is not $g$-$2$-absorbing, then there exist $x, y\in R_{e}$ and $m\in M_{g}$ such that $xym\in \phi(K)$, $xy\notin (K:_{R}M)$, $xm\notin K$ and $ym\notin K$. So, every $g$-$\phi$-$2$-absorbing $R$-submodule, which is not $g$-$2$-absorbing, satisfies Theorem \ref{Theorem 2.24}.
\end{rem}

\begin{thm}\label{Theorem 2.24 (1)} Let $M$ be a $G$-graded $R$-module, $g\in G$ and $K$ be a $g$-$\phi$-$2$-absorbing primary $R$-submodule of $M$. Suppose that $x, y\in R_{e}$ and $m\in M_{g}$ such that $xym\in \phi(K)$, $xy\notin (K:_{R}M)$, $xm\notin Grad_{M}(K)$ and $ym\notin Grad_{M}(K)$. Then
\begin{enumerate}
\item $xyK_{g}\subseteq\phi(K)$.

\item $x(K :_{R_{e}} M)m\subseteq\phi(K)$.

\item $y(K :_{R_{e}} M)m\subseteq\phi(K)$.

\item $(K :_{R_{e}} M)^{2}m\subseteq\phi(K)$.
\end{enumerate}
\end{thm}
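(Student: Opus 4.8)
The plan is to follow the proof of Theorem~\ref{Theorem 2.24} almost verbatim, the only change being that every occurrence of the conclusion ``$\,\in K\,$'' coming from the $2$-absorbing property is replaced by ``$\,\in Grad_{M}(K)\,$'', since the defining condition of a $g$-$\phi$-$2$-absorbing primary submodule is obtained from that of a $g$-$\phi$-$2$-absorbing submodule precisely by weakening $xm\in K$ (resp. $ym\in K$) to $xm\in Grad_{M}(K)$ (resp. $ym\in Grad_{M}(K)$). Throughout I use three standing facts: $\phi(K)$ and $Grad_{M}(K)$ are graded $R$-submodules of $M$ (so both are closed under subtraction), $K_{g}\subseteq K\subseteq Grad_{M}(K)$, and $(K:_{R_{e}}M)\subseteq (K:_{R}M)$ with $(K:_{R}M)$ a graded ideal of $R$ by Lemma~\ref{1}. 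Each of the four inclusions is proved by contradiction, assuming the claimed inclusion fails and producing a witness that, after a suitable homogeneous perturbation of $m$, $x$, or $y$, violates one of the three hypotheses $xy\notin(K:_{R}M)$, $xm\notin Grad_{M}(K)$, $ym\notin Grad_{M}(K)$.

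For (1), assume $xyK_{g}\nsubseteq\phi(K)$ and pick $k\in K_{g}$ with $xyk\notin\phi(K)$. Then $m+k\in M_{g}$, and $xy(m+k)=xym+xyk$ lies in $K$ (as $xym\in\phi(K)\subseteq K$ and $xyk\in K$) but not in $\phi(K)$ (else $xyk=xy(m+k)-xym\in\phi(K)$). Applying the $g$-$\phi$-$2$-absorbing primary property to $x,y\in R_{e}$ and $m+k\in M_{g}$ and discarding the option $xy\in(K:_{R}M)$, I get $x(m+k)\in Grad_{M}(K)$ or $y(m+k)\in Grad_{M}(K)$; since $xk,yk\in Grad_{M}(K)$, subtracting yields $xm\in Grad_{M}(K)$ or $ym\in Grad_{M}(K)$, a contradiction. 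Parts (2) and (3) are symmetric: for (2), take $a\in(K:_{R_{e}}M)$ with $xam\notin\phi(K)$, form $y+a\in R_{e}$, and check $x(y+a)m\in K-\phi(K)$; the $g$-$\phi$-$2$-absorbing primary property then forces one of $xm\in Grad_{M}(K)$, $(y+a)m\in Grad_{M}(K)$, $x(y+a)\in(K:_{R}M)$, and subtracting $am\in Grad_{M}(K)$ (resp. $xa\in(K:_{R}M)$) recovers a contradiction with $ym\notin Grad_{M}(K)$ (resp. $xy\notin(K:_{R}M)$).

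Finally, (4) is the step that genuinely uses (2) and (3): assuming $(K:_{R_{e}}M)^{2}m\nsubseteq\phi(K)$, choose $a,b\in(K:_{R_{e}}M)$ with $abm\notin\phi(K)$ and expand $(x+a)(y+b)m=xym+xbm+aym+abm$. Parts (2) and (3) give $xbm,aym\in\phi(K)$, and $xym\in\phi(K)$, so the sum of the first three terms lies in $\phi(K)$ while $abm\notin\phi(K)$; hence $(x+a)(y+b)m\in K-\phi(K)$ with $x+a,y+b\in R_{e}$. The primary property then yields $(x+a)m\in Grad_{M}(K)$, $(y+b)m\in Grad_{M}(K)$, or $(x+a)(y+b)\in(K:_{R}M)$, each of which collapses---after subtracting $am,bm\in Grad_{M}(K)$ or the cross terms $xb,ay,ab\in(K:_{R}M)$---to a contradiction with one of the three hypotheses. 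The only real point requiring care, and the step I expect to be the main obstacle, is the bookkeeping that keeps every perturbed element homogeneous of the correct degree ($x,y$ and their perturbations in $R_{e}$, the module element in $M_{g}$) so that the $g$-graded hypothesis actually applies; the algebra itself is routine once $Grad_{M}(K)$ is known to be a submodule containing $K_{g}$.
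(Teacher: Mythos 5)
Your proposal is correct and matches the paper's intent exactly: the paper's own proof of Theorem~\ref{Theorem 2.24 (1)} is simply the remark that it follows by the same argument as Theorem~\ref{Theorem 2.24}, which is precisely the adaptation you carry out (replacing membership in $K$ by membership in $Grad_{M}(K)$ where the absorbing property is invoked, and using that $Grad_{M}(K)$ is a graded submodule containing $K$). Your write-up in fact supplies the details the paper leaves implicit, and the degree bookkeeping and the use of parts (2) and (3) inside part (4) are handled correctly.
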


\begin{proof}It can be easily proved by using a similar argument in the proof of Theorem \ref{Theorem 2.24}.
\end{proof}

\begin{rem}Note that if $K$ is a $g$-$\phi$-$2$-absorbing primary $R$-submodule of $M$ which is not $g$-$2$-absorbing primary, then there exist $x, y\in R_{e}$ and $m\in M_{g}$ such that $xym\in \phi(K)$, $xy\notin (K:_{R}M)$, $xm\notin Grad_{M}(K)$ and $ym\notin Grad_{M}(K)$. So, every $g$-$\phi$-$2$-absorbing primary $R$-submodule, which is not $g$-$2$-absorbing primary, satisfies Theorem \ref{Theorem 2.24 (1)}.
\end{rem}

\begin{thm}\label{Theorem 2.25} Let $M$ be a $G$-graded $R$-module and $g\in G$. If $K$ is a $g$-$\phi$-$2$-absorbing primary $R$-submodule of $M$ that is not $g$-$2$-absorbing primary, then $(K :_{R_{e}} M)^{2}K_{g}\subseteq\phi(K)$.
\end{thm}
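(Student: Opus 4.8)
The plan is to argue by contradiction, exploiting the witness supplied by the remark preceding this statement together with the four containments of Theorem \ref{Theorem 2.24 (1)}. Since $K$ is $g$-$\phi$-$2$-absorbing primary but not $g$-$2$-absorbing primary, that remark furnishes $x, y\in R_{e}$ and $m\in M_{g}$ with $xym\in\phi(K)$, $xy\notin(K:_{R}M)$, $xm\notin Grad_{M}(K)$ and $ym\notin Grad_{M}(K)$; and Theorem \ref{Theorem 2.24 (1)} then gives $xyK_{g}\subseteq\phi(K)$, $x(K:_{R_{e}}M)m\subseteq\phi(K)$, $y(K:_{R_{e}}M)m\subseteq\phi(K)$ and $(K:_{R_{e}}M)^{2}m\subseteq\phi(K)$. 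Fixing arbitrary $a, b\in(K:_{R_{e}}M)$ and $k\in K_{g}$, the goal is $abk\in\phi(K)$. Throughout I would use that $Grad_{M}(K)$ is a graded submodule containing $K$ (being an intersection of graded prime submodules), so it absorbs any summand lying in $K$ or obtained by multiplying $k\in K_{g}$ by a ring element, and that $(K:_{R_{e}}M)\subseteq(K:_{R}M)$, an ideal, so it absorbs products such as $xb$, $ay$, $ab$.

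Before the main expansion I would first record two auxiliary facts: $xbk\in\phi(K)$ and $ayk\in\phi(K)$. For the first, apply the defining condition to the homogeneous data $x,\,(y+b)\in R_{e}$ and $m+k\in M_{g}$. Expanding, $x(y+b)(m+k)=xym+xyk+xbm+xbk$, where the first three summands lie in $\phi(K)$ by the witness and by parts (1),(2) of Theorem \ref{Theorem 2.24 (1)}, while all four lie in $K$. Hence if $xbk\notin\phi(K)$ then $x(y+b)(m+k)\in K-\phi(K)$, so one of $x(m+k)\in Grad_{M}(K)$, $(y+b)(m+k)\in Grad_{M}(K)$, $x(y+b)\in(K:_{R}M)$ holds. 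After subtracting the absorbable summands ($xk$ into $Grad_{M}(K)$; $bm,bk$ into $K$ and $yk$ into $Grad_{M}(K)$; $xb$ into $(K:_{R}M)$), these force $xm\in Grad_{M}(K)$, $ym\in Grad_{M}(K)$, or $xy\in(K:_{R}M)$ respectively, each contradicting a property of the witness. Thus $xbk\in\phi(K)$, and the symmetric computation with $(x+a)\,y\,(m+k)$ (using part (3)) gives $ayk\in\phi(K)$.

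Finally I would expand $(x+a)(y+b)(m+k)$ into its eight terms $xym,\,xyk,\,xbm,\,xbk,\,aym,\,ayk,\,abm,\,abk$. Each lies in $K$, and all but the last lie in $\phi(K)$: $xym$ by hypothesis, $xyk,\,xbm,\,aym,\,abm$ by parts (1)--(4) of Theorem \ref{Theorem 2.24 (1)}, and $xbk,\,ayk$ by the auxiliary facts just proved. Consequently $(x+a)(y+b)(m+k)-abk\in\phi(K)$. If $abk\notin\phi(K)$, then $(x+a)(y+b)(m+k)\in K-\phi(K)$, and applying the $g$-$\phi$-$2$-absorbing primary condition to $(x+a),(y+b)\in R_{e}$ and $m+k\in M_{g}$ yields $(x+a)(m+k)\in Grad_{M}(K)$, $(y+b)(m+k)\in Grad_{M}(K)$, or $(x+a)(y+b)\in(K:_{R}M)$. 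Absorbing the $a,b,k$ cross-terms exactly as above collapses these to $xm\in Grad_{M}(K)$, $ym\in Grad_{M}(K)$, or $xy\in(K:_{R}M)$, contradicting the witness in every case. Hence $abk\in\phi(K)$, and as $a,b\in(K:_{R_{e}}M)$ and $k\in K_{g}$ were arbitrary, $(K:_{R_{e}}M)^{2}K_{g}\subseteq\phi(K)$.

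The main obstacle is precisely the two cross-terms $xbk$ and $ayk$: a naive expansion of $(x+a)(y+b)(m+k)$ does not collapse to $abk$ because of them, so the real content of the proof lies in the two auxiliary claims, each needing its own application of the defining condition to a once-shifted triple. The routine-but-essential bookkeeping is keeping track of which summands get absorbed into the submodule $Grad_{M}(K)$ and which into the ideal $(K:_{R}M)$.
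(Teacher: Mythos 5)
Your proof is correct and follows the same contradiction strategy as the paper: fix the witness triple $(x,y,m)$ furnished by the preceding remark, suppose $abk\notin\phi(K)$ for some $a,b\in(K:_{R_{e}}M)$ and $k\in K_{g}$, show $(x+a)(y+b)(m+k)\in K-\phi(K)$, apply the $g$-$\phi$-$2$-absorbing primary condition, and absorb cross terms to contradict the choice of witness. The difference is that the paper disposes of the whole expansion in one stroke, asserting that Theorem \ref{Theorem 2.24 (1)} alone yields $(x+a)(y+b)(m+k)\in K-\phi(K)$; but of the eight summands $xym$, $xyk$, $xbm$, $xbk$, $aym$, $ayk$, $abm$, $abk$, the two cross terms $xbk$ and $ayk$ are not covered by any part of that theorem: part (1) requires both ring factors to come from the witness pair, parts (2)--(4) require the module factor to be $m$, and one cannot reapply the theorem to a triple such as $(x,b,m)$ because $xb\in(K:_{R}M)$ violates its hypothesis. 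Your two auxiliary claims --- applying the defining condition to $x,\,(y+b),\,(m+k)$ and to $(x+a),\,y,\,(m+k)$ to conclude $xbk,\,ayk\in\phi(K)$ --- supply exactly this missing step, and the absorption arguments you use in each case ($K\subseteq Grad_{M}(K)$ swallowing $xk$, $yk$, $bm$, $bk$, and the ideal $(K:_{R}M)$ swallowing $xb$, $ay$) are all justified. So your proposal is not merely correct: it identifies and repairs a genuine gap in the published proof while preserving its overall structure, which is the main content of the argument here.
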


\begin{proof}Since $K$ is a $g$-$\phi$-$2$-absorbing primary $R$-submodule of $M$ that is not $g$-$2$-absorbing primary, there exist $x, y\in R_{e}$ and $m\in M_{g}$ such that $xym\in \phi(K)$, $xy\notin (K:_{R}M)$, $xm\notin Grad_{M}(K)$ and $ym\notin Grad_{M}(K)$. Suppose that $(K :_{R_{e}} M)^{2}K_{g}\nsubseteq\phi(K)$. Then there are $a, b\in (K :_{R_{e}} M)$ and $k\in K_{g}$ such that $abk\notin \phi(K)$. By Theorem \ref{Theorem 2.24 (1)}, we get $(x + a)(y + b)(m +k)\in K-\phi(K)$. So, $(x + a)(m + k)\in Grad_{M}(K)$ or $(y + b)(m + k)\in Grad_{M}(K)$ or $(x + a)(y + b)\in (K :_{R} M)$. Therefore, $xm\in Grad_{M}(K)$ or $ym\in Grad_{M}(K)$ or $xy\in (K :_{R} M)$, which is a contradiction. Hence, $(K :_{R_{e}} M)^{2}K_{g}\subseteq\phi(K)$.
\end{proof}

\begin{prop}\label{Proposition 2.34}Let $M$ be a graded $R$-module and $K$ be a graded $\phi$-$2$-absorbing primary $R$-submodule of $M$. If $\phi(K)$ is a graded $2$-absorbing primary $R$-submodule of $M$, then $K$ is a graded-$2$-absorbing primary $R$-submodule of $M$.
\end{prop}

\begin{proof}Assume that $x, y\in h(R)$ and $m\in h(M)$ such that $xym\in K$. If $xym\in \phi(K)$, then we conclude that $xm\in Grad_{M}(\phi(K))\subseteq Grad_{M}(K)$ or $ym\in Grad_{M}(\phi(K))\subseteq Grad_{M}(K)$ or $xy\in (\phi(K) :_{R} M)\subseteq(K :_{R} M)$ since $\phi(K)$ is graded $2$-absorbing primary, and so the result holds. If $xym\notin \phi(K)$, then the result holds easily since $K$ is graded $\phi$-$2$-absorbing primary.
\end{proof}

\begin{thm}\label{Theorem 2.39} Let $M$ be a graded $R$-module and $K$ be a graded $R$-submodule of $M$ with $\phi(Grad_{M}(K))\subseteq\phi(K)$. If $Grad_{M}(K)$ is a graded $\phi$-prime $R$-submodule of $M$, then $K$ is a graded $\phi$-$2$-absorbing primary $R$-submodule of $M$.
\end{thm}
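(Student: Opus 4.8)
The plan is to verify the defining condition of a graded $\phi$-$2$-absorbing primary submodule for $K$ directly, by feeding the hypothesis on $N:=Grad_{M}(K)$ into the single-scalar graded $\phi$-prime condition after an associative regrouping of the triple product. First I would record that $K$ is proper: since $N$ is graded $\phi$-prime it is by definition a proper graded $R$-submodule, and $K\subseteq N$, so $K\neq M$.

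Next, take homogeneous $x,y\in h(R)$ and $m\in h(M)$ with $xym\in K-\phi(K)$; the goal is to show $xm\in Grad_{M}(K)$ or $ym\in Grad_{M}(K)$ or $xy\in(K:_{R}M)$. Since $K\subseteq N$ we have $xym\in N$, and the containment $\phi(Grad_{M}(K))\subseteq\phi(K)$ together with $xym\notin\phi(K)$ forces $xym\notin\phi(N)$ (if $xym$ lay in $\phi(N)$ it would lie in the larger set $\phi(K)$, a contradiction); hence $xym\in N-\phi(N)$. This is precisely where the assumption $\phi(Grad_{M}(K))\subseteq\phi(K)$ is consumed, converting ``$\notin\phi(K)$'' into ``$\notin\phi(N)$.''

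The key move is then to read $xym$ as the action of the homogeneous ring element $x$ on the homogeneous module element $ym\in h(M)$, so that $x\cdot(ym)\in N-\phi(N)$ matches the graded $\phi$-prime hypothesis on $N$. Applying that hypothesis yields either $ym\in N=Grad_{M}(K)$, which is already the second alternative we want, or $x\in(N:_{R}M)$. In the latter case $xM\subseteq N$, so in particular $xm\in N=Grad_{M}(K)$, which is the first alternative. Either way the required disjunction holds (in fact the third clause $xy\in(K:_{R}M)$ is never needed), so $K$ is a graded $\phi$-$2$-absorbing primary $R$-submodule of $M$.

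I do not anticipate a genuine obstacle: the only points requiring care are that products of homogeneous elements remain homogeneous, so that $ym$ and $xym$ are legitimate homogeneous inputs to the definitions, and that the inclusion $\phi(Grad_{M}(K))\subseteq\phi(K)$ is used in the correct direction. The essential idea is simply the regrouping $x(ym)$, which converts the two-scalar absorbing condition into the one-scalar prime condition available from the hypothesis on $Grad_{M}(K)$.
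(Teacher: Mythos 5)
Your proposal is correct and follows essentially the same route as the paper: both reduce the two-scalar condition to the one-scalar graded $\phi$-prime condition on $Grad_{M}(K)$ by regrouping the triple product (you write it as $x(ym)$, the paper as $y(xm)$, which is the same argument up to symmetry), using the hypothesis $\phi(Grad_{M}(K))\subseteq\phi(K)$ exactly as the paper does to pass from $xym\notin\phi(K)$ to $xym\notin\phi(Grad_{M}(K))$. Your additional remark that $K$ is proper because $Grad_{M}(K)$ is proper and contains $K$ is a small point the paper leaves implicit, but the substance of the two proofs is identical.
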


\begin{proof}Let $x,y\in h(R)$ and $m\in h(M)$ such that $xym\in K-\phi(K)$ and $xm\notin Grad_{M}(K)$. Since $Grad_{M}(K)$ is a graded $\phi$-prime $R$-submodule and $xym\in Grad_{M}(K)-\phi(Grad_{M}(K))$, $y\in (Grad_{M}(K) :_{R} M)$. So, $ym\in Grad_{M}(K)$. Consequently, $K$ is a graded $\phi$-$2$-absorbing primary $R$-submodule of $M$.
\end{proof}

\section*{\textbf{Acknowledgement}}

This research was funded by the Deanship of Scientific Research at Princess Nourah bint Abdulrahman University through the Fast-track Research Funding Program.

\end{document}